\newtheorem{lemma}{Lemma}
\providecommand{\abs}[1]{\lvert#1\rvert}
\providecommand{\norm}[1]{\lVert#1\rVert}
\newcommand{\R}{\mathbb{R}}
\newcommand{\N}{\mathbb{N}}
\newcommand{\x}{\mathbf{x}}
\newcommand{\y}{\mathbf{y}}
\newcommand{\Id}{\mathrm{Id}}
\newcommand{\Tr}{\mathrm{Tr}}
\DeclareMathOperator*{\mean}{mean}
\newcommand{\resfolder}{figures/small}
\begin{document}

\title{A Numerical Framework for Efficient Motion Estimation on Evolving Sphere-Like Surfaces based on Brightness and Mass Conservation Laws}

\author{Lukas F. Lang}
\affil{\footnotesize Department of Applied Mathematics and Theoretical
Physics, University of Cambridge, Wilberforce Road, Cambridge CB3 0WA, United Kingdom}

\date{}
\maketitle

\begin{abstract}
\noindent
In this work we consider brightness and mass conservation laws for motion estimation on evolving Riemannian 2-manifolds that allow for a radial parametrisation from the 2-sphere.
While conservation of brightness constitutes the foundation for optical flow methods and has been generalised to said scenario, we formulate in this article the principle of mass conservation for time-varying surfaces which are embedded in Euclidean 3-space and derive a generalised continuity equation.
The main motivation for this work is efficient cell motion estimation in time-lapse (4D) volumetric fluorescence microscopy images of a living zebrafish embryo.
Increasing spatial and temporal resolution of modern microscopes require efficient analysis of such data.
With this application in mind we address this need and follow an emerging paradigm in this field: dimensional reduction.
In light of the ill-posedness of considered conservation laws we employ Tikhonov regularisation and propose the use of spatially varying regularisation functionals that recover motion only in regions with cells.
For the efficient numerical solution we devise a Galerkin method based on compactly supported (tangent) vectorial basis functions.
Furthermore, for the fast and accurate estimation of the evolving sphere-like surface from scattered data we utilise surface interpolation with spatio-temporal regularisation.
We present numerical results based on aforementioned zebrafish microscopy data featuring fluorescently labelled cells.
\end{abstract}

\section{Introduction}

Recent advances in microscopy imaging techniques allow to study cellular dynamics of biological model organisms in more detail than ever before, see e.g. \cite{KelSchmSanKhaBao10, KelSchmWitSte08, KrzGunSauStrHuf12}.
Time-lapse volumetric (4D) image sequences of the development of entire living animals can be captured in high resolution and on a sub-cellular scale.
However, increasing spatial and temporal resolutions require additional efforts in dealing with the resulting large volumes of data.
The need for efficient methods to analyse such data has already been acknowledged and is considered a major interdisciplinary challenge \cite{Kel13, ReyPeyHuiTom14}.

One promising approach in dealing with image sequences of this type is \emph{dimensional reduction}.
A geometric model of the observed organism is introduced and the captured data is considered only with respect to this geometry, see \cite{HeeStr15, SchmShaScheWebThi13}.
These efforts focus on the true shape---or an approximation---of the specimen and thereby reduce the spatial dimension of the data by considering only the restriction, or a suitable projection, to this geometry.
Due to the spatial sparsity of the volumetric data the essential information is preserved.

A major gain of this approach is that it can also reduce the computational effort during analysis of the recorded material, see e.g. \cite{KirLanSch13, KirLanSch14, KirLanSch15, LanSch17, SchmShaScheWebThi13}.
In addition, introducing a geometric representation of the specimen allows to compute accurate measurements, such as distances, on curved surfaces rather than in---possibly distorting---projections.
For the quantitative analysis of cellular processes this leads to a considerable improvement, see \cite{HeeStr15}.

The zebrafish is a popular and well-established animal research model that can be observed \emph{in vivo}.
Understanding its developmental process is of major interest.
We refer to \cite{KimBalKimUllSchi95} for a detailed discussion and illustrations.
Cellular dynamics of \emph{endodermal} cells are crucial for organ and tissue formation during early development of the organism.
Despite its importance, there is a lack of understanding of their migration and proliferation patterns \cite{AmaLemMosMcDWan14, SchmShaScheWebThi13}.
However, endodermal cells are known to form a so-called \emph{monolayer}, meaning that they do not stack on top of each other but rather float side by side forming a contiguous single-cell layer \cite{WarNus99}.
For the purpose of observation, these cells can be fluorescently labelled and recorded separately from the background by means of confocal laser-scanning microscopy.
Figure~\ref{fig:raw} illustrates a section of a captured image sequence containing only the upper hemisphere of the embryo.
Shown are nuclei of endodermal cells during the gastrula period forming a round surface in a single-cell layer.

\begin{figure}[t]
	\includegraphics[width=0.32\textwidth]{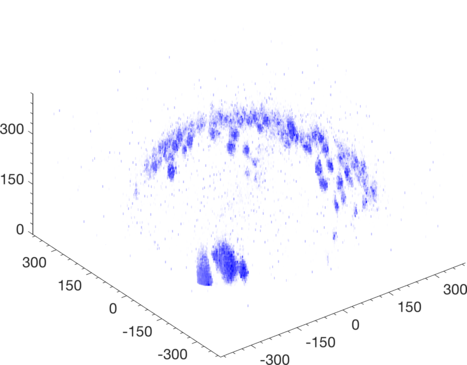} \hfill
	\includegraphics[width=0.32\textwidth]{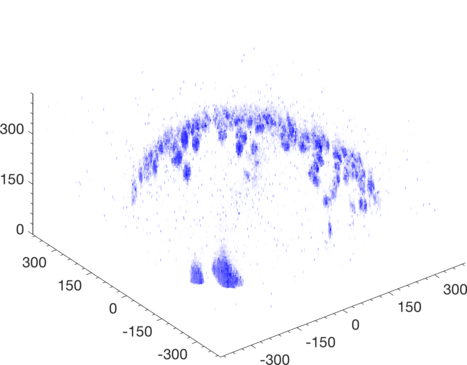} \hfill
	\includegraphics[width=0.32\textwidth]{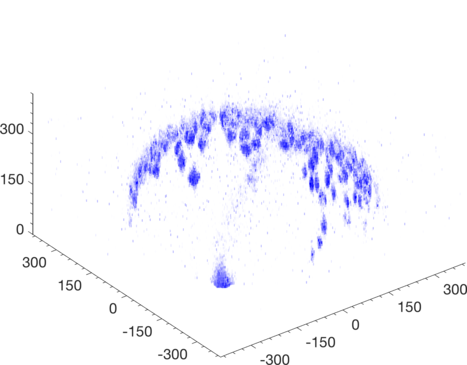} \\
	\includegraphics[width=0.32\textwidth]{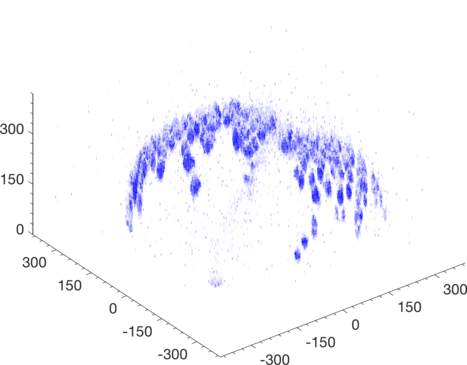} \hfill
	\includegraphics[width=0.32\textwidth]{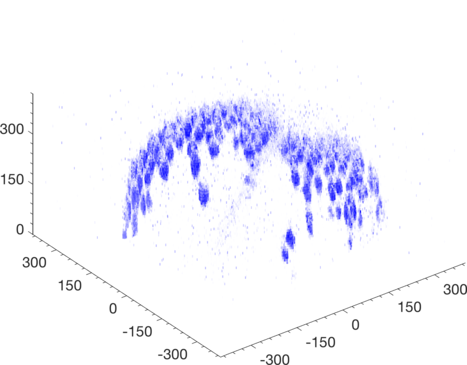} \hfill
	\includegraphics[width=0.32\textwidth]{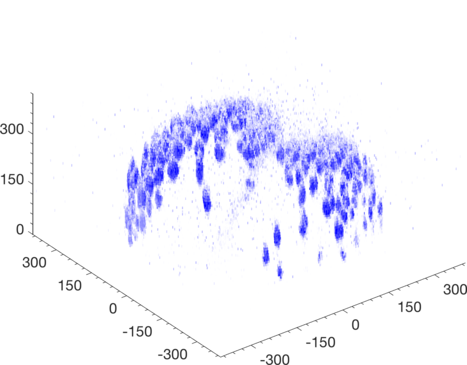} \\
	\caption{Frames $110, 120, \dots, 150$ (left to right, top to bottom) of a volumetric zebrafish microscopy image sequence recorded during early embryogenesis. The sequence contains 151 frames recorded at intervals of $120 \, \mathrm{s}$. Blue colour indicates fluorescence response. As time evolves, the initially spherical yolk develops a clearly visible dent, which is where the embryonic axis forms and cells eventually converge to, see also \cite[Figs.~11 and 15]{KimBalKimUllSchi95}. All dimensions are in micrometer ($\mu$m).}
	\label{fig:raw}
\end{figure}

\begin{figure}[t]
	\includegraphics[width=0.32\textwidth]{\resfolder/raw3-cxcr4aMO2_290112-150} \hfill
	\includegraphics[width=0.32\textwidth]{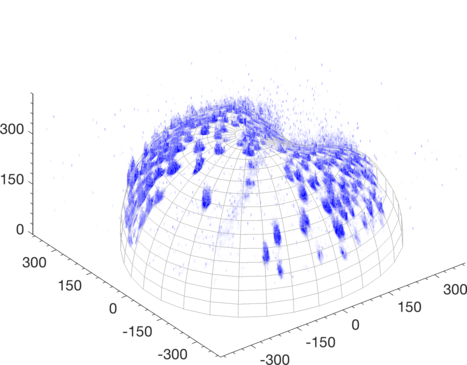} \hfill
	\includegraphics[width=0.32\textwidth]{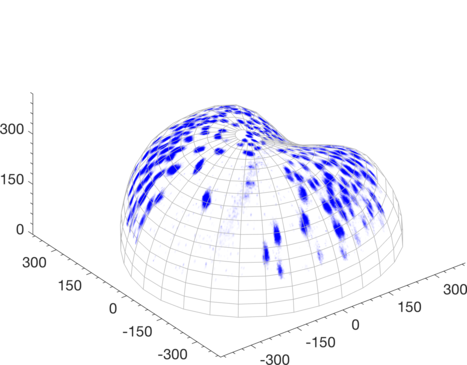}
	\caption{Frame no. 150 of the zebrafish image sequence. The left image depicts the unprocessed volumetric microscopy data $f^{\delta}$. The curved mesh in the center image illustrates a sphere-like surface fitted to approximate cell centres. The right image shows the surface data $\hat{f}$ obtained by taking the radial maximum intensity projection of $f^{\delta}$ onto the surface within a narrow band. For details see Sec.~\ref{sec:experiments}. All dimensions are in micrometer ($\mu$m).}
	\label{fig:approach}
\end{figure}

The primary goal of this article is quantitative motion estimation of endodermal cells in fluorescence microscopy data of a living zebrafish embryo.
Efficient motion estimation is crucial for the large-scale automated analysis of such datasets and can provide new insights into cellular mechanisms and the dynamic behaviour of cells.
See e.g. \cite{AmaMyeKel13, BorOriVieWhi13, MelCamLomRizVer07, QueMenCam10, SchmShaScheWebThi13}.

We build upon previous work \cite{LanSch17} where the deforming single-cell layer is modelled as a closed surface $\mathcal{M}_{t} \subset \R^{3}$, $t \in [0, T]$, of the form
\begin{equation*}
	\left\{ \tilde{\rho}(t, x) x: x \in \mathcal{S}^{2} \right\}
\end{equation*}
together with a time-dependent function $\hat{f}(t, \cdot): \mathcal{M}_{t} \to \R$ that indicates fluorescence response and is assumed to be directly proportional to the observed intensity.
Here, $\tilde{\rho}(t, \cdot): \mathcal{S}^{2} \to (0, \infty)$ is a radial deformation of the 2-sphere $\mathcal{S}^{2}$.
See Fig.~\ref{fig:approach} for the general idea and Fig.~\ref{fig:surfaces} for a sketch.

The main idea, which was developed in \cite{KirLanSch13, KirLanSch15}, is to conceive the  motion of a cell---as it migrates through Euclidean 3-space---only with respect to this moving surface.
As a consequence, the velocity $\mathbf{\hat{U}}(t, x) \in \R^{3}$ of a cell which always stays on this surface can be decomposed into the sum of a---prescribed and in general not tangential---surface velocity $\mathbf{\hat{V}}(t, x) \in \R^{3}$ and a purely tangential velocity $\mathbf{\hat{w}}(t, x) \in T_{x}\mathcal{M}_{t}$ which is relative to $\mathbf{\hat{V}}$.
Here, $T_{x} \mathcal{M}_{t}$ denotes the tangent space at $x \in \mathcal{M}_{t}$.
See Fig.~\ref{fig:sketch} for illustration.
In further consequence, one can estimate $\mathbf{\hat{w}}$ from the data $\hat{f}$ by solving a parametrised optical flow problem
\begin{equation*}
	d_{t}^{\mathbf{\hat{V}}} \hat{f} + \nabla_{\mathcal{M}} \hat{f} \cdot \mathbf{\hat{w}} = 0
\end{equation*}
on this evolving surface.
Here, $d_{t}^{\mathbf{\hat{V}}}$ denotes a suitable temporal derivative, $\nabla_{\mathcal{M}}$  the (spatial) surface gradient, and dot the standard inner product.
As a result, the velocity of a cell can be estimated as $\mathbf{\hat{U}} = \mathbf{\hat{V}} + \mathbf{\hat{w}}$.
While $\mathbf{\hat{w}}$ is relative to the chosen $\mathbf{\hat{V}}$ and should be interpreted with care, it is reasonable to assume that their sum is close to the true velocity of a cell.
Integral curves then yield approximate cell trajectories.

In this model, $\hat{f}$ is assumed to satisfy a \emph{brightness constancy assumption}, which is typical for optical flow-based motion estimation: the intensity $\hat{f}$ is conserved along trajectories of moving points.
However, in many situations it is too restrictive and possibly violated, see e.g. the discussion in \cite[Sec.~3]{CorMemPer02}.

In this article, we address this issue and assume that $\hat{f}$ instead fulfils \emph{conservation of mass}.
We derive a suitable generalisation of the continuity equation to evolving surfaces which are embedded in $\R^{3}$ and obtain the pointwise conservation law
\begin{equation*}
	d_{t}^{\mathbf{\hat{N}}} \hat{f} + \nabla_{\mathcal{M}} \cdot (\hat{f} \mathbf{\hat{u}}) - \hat{f}KV = 0.
\end{equation*}
Here, $d_{t}^{\mathbf{\hat{N}}}$ denotes the normal time derivative and $\nabla_{\mathcal{M}} \cdot$ the surface divergence, $K$ is related to surface curvature, and $V$ is the scalar normal velocity of the moving surface.
The main advantage, compared to \cite{KirLanSch13, KirLanSch15, LanSch17}, is that one is able to directly infer the entire tangential velocity $\mathbf{\hat{u}} = \mathrm{P}_{\mathcal{M}}(\mathbf{\hat{U}})$ of cells from the data $\hat{f}$,
where $\mathrm{P}_{\mathcal{M}}$ denotes the orthogonal projector onto the tangent space of $\mathcal{M}$.
The normal component of $\mathbf{\hat{U}}$ is prescribed by the surface's normal velocity and the total velocity $\mathbf{\hat{U}}$ can thus be estimated by adding the tangential part $\mathbf{\hat{u}}$.

In view of the ill-posedness of above-mentioned conservation equations, we follow a variational approach and minimise a Tikhonov-type functional of the form
\begin{equation*}
	\mathcal{D}(\cdot, \hat{f}) + \alpha \mathcal{R}(\cdot),
\end{equation*}
where $\mathcal{D}$ is the squared $L^{2}$ norm of the left hand side of one of the above identities, $\mathcal{R}$ is a regularisation functional, and $\alpha > 0$ a parameter balancing the two terms.
One of the major advantages of applying this energy to mass preservation is that it favours regularity of $\mathbf{\hat{u}}$ rather than regularity of $\mathbf{\hat{w}}$, which depends on the prescribed---and in practice often unknown---surface velocity.

We address in this work another major point.
While dense motion estimation
is often desired for complex natural scenes, it is redundant for aforementioned microscopy data.
Such data are considerably simpler due to the characteristic shape of cell nuclei, the absence of occlusions, and their sparsity.
In order to mitigate undesired fill-in effects of quadratic regularisation functionals to areas where $\hat{f}$ is zero, we introduce novel regularisation functionals inspired by image segmentation models, see e.g. \cite{BarChaChuJunKir11}.
Given a segmentation of the cells, motion is only estimated in regions where data is present.

\subsection{Contributions}

The contributions of this article are as follows.
First, we discuss and introduce brightness and mass conservation laws on evolving surfaces, and relate these two concepts to each other.
While conservation of brightness is the foundation for the optical flow equation and has been dealt with in \cite{KirLanSch13, KirLanSch15},  we generalise in this article the principle of mass conservation to time-varying surfaces which are embedded in Euclidean 3-space and derive a generalised continuity equation.
For numerical convenience we devise a parametrised version thereof.

Second, we propose new spatially varying regularisation functionals for motion estimation based on the discussed conservation laws.
They are specially tailored to mentioned fluorescence microscopy data and indicate motion only in regions with cells present.

Third, for the numerical solution we propose a Galerkin method based on compactly supported (vectorial) basis functions.
Resulting sparsity effects lead to vast improvements in performance compared to \cite{LanSch17}, which uses globally supported basis functions.
Moreover, we provide a formula for the Hilbert-Schmidt norm of the covariant derivative of a vector field, which is commonly used for tangent vector field regularisation.
As a result, the Gram-Schmidt orthonormalisation of the tangent basis is rendered redundant, yielding another major performance gain compared to \cite{BauGraKir15, KirLanSch15, LanSch17}.

Fourth, for extracting a sphere-like surface together with surface image data from aforementioned microscopy image sequences we propose surface interpolation with spatio-temporal regularisation.
Compared to \cite{LanSch17}, where only spatial regularisation is used, this leads to a more accurate estimation of the surface's (normal) velocity and the surface data, which---in turn---should improve the accuracy of the computed cell velocities.

Fifth, we present numerical results based on aforementioned zebrafish microscopy data.
We compute and compare cell motion estimated by imposing either of the two discussed conservation laws.

\subsection{Related Work}

Concerning dense motion estimation in $\R^{2}$, Horn and Schunck \cite{HorSchu81} were the first to propose a variational approach based on conservation of brightness.
They suggested minimising a Tikhonov-type functional with $H^{1}$ Sobolev semi-norm regularisation, favouring spatially regular vector fields.
For a general introduction to the topic see e.g. \cite{AubDerKor99, AubKor06} and for a survey on various optical flow functionals see \cite{WeiBruBroPap06}.
Well-posedness of the Horn-Schunck functional was proved by Schnörr \cite{Schn91a}, where the problem was treated on irregular planar domains and solved by means of a finite element method.

Weickert and Schnörr \cite{WeiSchn01b} proposed an extension to the domain $[0, T] \times \R^{2}$.
The model includes spatial as well as temporal first-order regularisation and is particularly appealing whenever integral curves are to be recovered.
A framework unifying various spatial and temporal regularisers was established by the same authors in \cite{WeiSchn01a}.
For the comparison of different motion estimation methods an evaluation framework was developed in \cite{BakSchaLewRotBla11}.

Only recently, generalisations to non-Euclidean and non-static domains have received increasing attention.
For the purpose of robot vision, optical flow on the static round sphere was considered in \cite{ImiSugTorMoc05, TorImiSugMoc05}.
With an application to brain image analysis, Lef\`{e}vre and Baillet \cite{LefBai08} generalised the Horn-Schunck functional to static surfaces which are embedded in $\R^{3}$ and proved well-posedness.
Numerically, the problem was solved on a triangle mesh with a finite element method.

With the aim of analysing cell motion in fluorescence microscopy data, Kirisits~et~al. \cite{KirLanSch13, KirLanSch15} considered a generalisation of the Horn-Schunck functional to evolving surfaces with boundary.
In particular, in \cite{KirLanSch15} the authors proposed a generalisation of the spatio-temporal model in \cite{WeiSchn01b}.
Minimisation was performed by solving the associated Euler-Lagrange equations in the coordinate domain with a finite-difference scheme.
In \cite{KirLanSch14}, they studied several  decomposition models for optical flow on the static 2-sphere.
The problems were solved by means of projection to finite-dimensional spaces spanned by tangent vector spherical harmonics.

In Bauer~et~al. \cite{BauGraKir15}, optical flow on moving manifolds with and without spatial boundary was investigated.
The authors considered product manifolds for which an appropriate Riemannian metric was constructed and well-posedness of their formulation was shown.

In Lang and Scherzer \cite{LanSch17}, the embryo of a zebrafish was modelled as an evolving sphere-like surface.
The generalised optical flow problem was rewritten as an equivalent problem on the 2-sphere and solved by means of a Galerkin method based on tangent vector spherical harmonics.
In order to find the sphere-like surface from microscopy data, surface interpolation from approximate cell centres was proposed.

We also refer to \cite{AmaMyeKel13, MelCamLomRizVer07, QueMenCam10}, where the optical flow was computed to track cells in microscopy 
data, and to \cite{BorOriVieWhi13}, where the optical flow was utilised to infer the motion of neural crest cells in zebrafish microscopy images.
Moreover, in Schmid~et~al. \cite{SchmShaScheWebThi13}, the sphere was used to model the embryo of a zebrafish and the motion of endodermal cells was computed in map projections by means of fluid image registration.

According to \cite{CorMemPer02}, Schunck \cite{Schu84} was the first to propose motion estimation in image sequences based on the continuity equation.
Since then, it has been utilised in numerous works, as mass preservation is a particularly appealing alternative for fluid motion estimation.
For instance, in \cite{BerHerYou00, CorMemPer02, ZhoKamGol00} it is was used to analyse meteorological satellite images.
In \cite{WilAmaLanLeu00}, fluid flow was estimated from image sequences and in \cite{SonLea91}, the continuity equation was utilised to find the 3D deformation of a beating human heart in tomography images.
Moreover, in \cite{Amin94} it was used to analyse blood flow and in \cite{CorHeiArrMemSan06} fluid flow was estimated by means of an integrated continuity equation paired with second-order regularisation.
In \cite{DawBruJiaButBur10}, they proposed to use the continuity equation for cardiac motion correction of 3D images obtained by positron emission tomography.
See also \cite{HeiMemSchn10} for a survey of variational methods for fluid flow estimation.
Finally, in \cite{Dir15}, for the purpose of joint motion estimation and image reconstruction, both the optical flow and the continuity equation were used.

The remainder of this article is structured as follows.
In Sec.~\ref{sec:background}, we introduce sphere-like evolving surfaces and their basic properties.
Moreover, we discuss vectorial Sobolev spaces on manifolds, and introduce compactly supported (vectorial) basis functions and scalar spherical harmonics.
In Sec.~\ref{sec:model}, we discuss brightness and mass conservation on evolving surfaces and introduce for each conservation principle a variational formulation.
Section~\ref{sec:numerics} is dedicated to their numerical solution.
We derive necessary and sufficient conditions, which are evaluated and solved on the 2-sphere.
In order to find a sphere-like surface from real microscopy data, we propose surface interpolation with spatio-temporal regularisation and discuss its numerical solution by means of scalar spherical harmonic expansion.
In Sec.~\ref{sec:experiments} we discuss, compare, and visualise numerical results based on aforementioned microscopy data of a zebrafish.
Finally, Sec.~\ref{sec:conclusion} concludes the article.
\section{Notation and Background} \label{sec:background}

\subsection{Sphere-Like Evolving Surfaces} \label{sec:surfaces}

Let us denote by $\mathcal{S}^{2} = \{x \in \R^{3}: \norm{x} = 1\}$ the 2-sphere embedded in the 3-dimensional Euclidean space and let $\norm{x} = \sqrt{x \cdot x}$ denote the norm of $\R^{n}$, $n = \{2, 3\}$.
Moreover, we denote by
\begin{equation*}
	\x: \Omega \subset \R^{2} \to \R^{3}
\end{equation*}
a regular parametrisation of $\mathcal{S}^{2}$ mapping points $\xi = (\xi^{1}, \xi^{2})^{\top} \in \Omega$ in the coordinate domain to points $x = (x^{1}, x^{2}, x^{3})^{\top} \in \mathcal{S}^{2}$ on the sphere.
The outward unit normal at $x \in \mathcal{S}^{2}$ is denoted by $\mathbf{\tilde{N}}(x)$.

Let $I \coloneqq [0, T] \subset \R$ denote a time interval.
We consider a family $\mathcal{M} = \{ \mathcal{M}_{t}\}_{t \in I}$ of closed smooth 2-manifolds $\mathcal{M}_{t} \subset \R^{3}$ and assume that each $\mathcal{M}_{t}$ is regular and oriented by the outward unit normal field $\mathbf{\hat{N}}(t, x) \in \R^{3}$, $x \in \mathcal{M}_{t}$.
Furthermore, we assume that $\mathcal{M}$ admits a smooth and smoothly evolving parametrisation of the form
\begin{equation}
	\y: I \times \Omega \to \R^{3}, \quad (t, \xi^{1}, \xi^{2})^{\top} \mapsto \tilde{\rho}(t, \x(\xi^{1}, \xi^{2})) \x(\xi^{1}, \xi^{2}) \in \mathcal{M}_{t}
\label{eq:param}
\end{equation}
with $\tilde{\rho}: I \times \mathcal{S}^{2} \to (0, \infty)$ being a sufficiently smooth (radius) function.
We refer to $\mathcal{M}$ as \emph{evolving sphere-like surface}.

Let us denote by $\hat{f}: \mathcal{M} \to \R$ a smooth function on $\mathcal{M}$, by $f: I \times \Omega \to \R$ its coordinate representation, and by $\tilde{f}: I \times \mathcal{S}^{2} \to \R$ its representation on $\mathcal{S}^{2}$, respectively.
For $t \in I$ and $\xi \in \Omega$, they are related by
\begin{equation}
	f(t, \xi) = \tilde{f}(t, \x(\xi)) = \hat{f}(t, \y(t, \xi)).
\label{eq:coordrepr}
\end{equation}
The partial derivative with respect to $\xi^{i}$ is abbreviated by $\partial_{i}$.
Accordingly, the partial derivative with respect to time is denoted by $\partial_{t}$.
At this point let us clarify further notational conventions.
Functions or vector fields for which the domain is $\mathcal{S}^{2}$ will be indicated with a tilde and functions for which the domain is $\mathcal{M}$ will be indicated with a hat.
Their corresponding coordinate representation is treated without special indication.

Moreover, we define a smooth (spatial) extension $\bar{f}$ of $\hat{f}$ to $\R \setminus \{ 0 \}$ which is constant along radial lines.
It is given by
\begin{equation}
	\bar{f}(t, x) = \hat{f} \left(t, \tilde{\rho} \left(t, \frac{x}{\norm{x}} \right) \frac{x}{\norm{x}} \right).
\label{eq:extension}
\end{equation}
Figure~\ref{fig:surfaces} illustrates the setting.

\begin{figure}[t]
	\begin{center}
	\begin{tikzpicture}
		\draw [-stealth', thick, gray] (canvas polar cs:angle=45,radius=1cm) to +(canvas polar cs:angle=45,radius=1cm);
		\draw [-stealth', thick, gray] (canvas polar cs:angle=70,radius=1.77cm) to +(canvas polar cs:angle=30,radius=1cm);
		\draw [thick] (0, 0) circle (1);
		\draw [thick] (0, 2) .. controls (canvas polar cs:angle=70,radius=2.05cm) and (canvas polar cs:angle=65,radius=1.55cm) .. (canvas polar cs:angle=45,radius=1.5cm);
		\draw [thick] (canvas polar cs:angle=45,radius=1.5cm) .. controls (canvas polar cs:angle=30,radius=1.6cm) and (canvas polar cs:angle=20,radius=2.05cm) .. (2, 0);
		\draw [thick] (-2, 0) .. controls (-2, 1.11) and (-1.11, 2) .. (0, 2);
		\draw [thick] (-2, 0) .. controls (-2, -1.11) and (-1.11, -2) .. (0, -2) .. controls (1.11, -2) and (2, -1.11) .. (2, 0);
		\draw (0, 0) to (canvas polar cs:angle=70,radius=3cm);
		\filldraw [black] (canvas polar cs:angle=70,radius=1cm) circle (2pt);
		\filldraw [black] (canvas polar cs:angle=70,radius=1.77cm) circle (2pt);
		\node [left] at (canvas polar cs:angle=73,radius=0.7cm) {$x$};
		\node [left] at (canvas polar cs:angle=73,radius=1.6cm) {$\tilde{\rho}(t, x) x$};
		\node [gray, right] at (canvas polar cs:angle=55,radius=2.6cm) {$\mathbf{\hat{N}}$};
		\node [gray, right] at (canvas polar cs:angle=45,radius=2cm) {$\mathbf{\tilde{N}}$};
		\node [right] at (canvas polar cs:angle=-40,radius=1.05cm) {$\mathcal{S}^2$};
		\node [right] at (canvas polar cs:angle=-40,radius=2.1cm) {$\mathcal{M}_{t}$};
	\end{tikzpicture}
	\hspace{0.5cm}
	\includegraphics[width=0.45\textwidth]{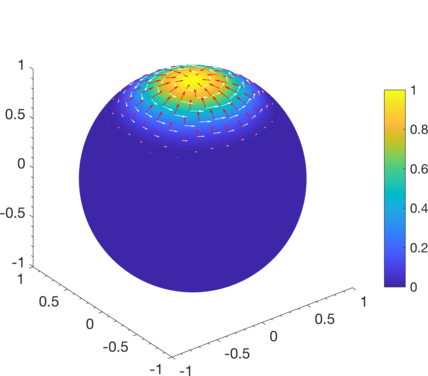}
	\end{center}
	\caption{Left: Illustration of a cut through surfaces $\mathcal{S}^{2}$ and $\mathcal{M}_{t}$ intersecting the origin. The extension $\bar{f}(t, \cdot)$ is constant along the shown radial line. Surface normals are depicted in grey. Right: Vectorial basis functions $\mathbf{\tilde{y}}_{j}^{(1)}$ (red) and $\mathbf{\tilde{y}}_{j}^{(2)}$ (white), and the corresponding zonal function $\tilde{b}_{j}$ centred at the north pole $x_{j} = (0, 0, 1)^{\top}$. The parameters of the corresponding function $b_{h}^{(k)}$ were chosen as $k = 3$ and $h = 0.6$, cf. Sec~\ref{sec:background:basisfun}.}
	\label{fig:surfaces}
\end{figure}

In the following, let us consider time $t \in I$ arbitrary but fixed.
We denote the tangent plane at a point $\y(t, \xi) \in \mathcal{M}_{t}$ by $T_{\y(t, \xi)}\mathcal{M}_{t}$ and the tangent bundle by $T\mathcal{M}_{t} = \bigl\{ \{\y(t, \xi)\} \times T_{\y(t, \xi)}\mathcal{M}_{t}: \xi \in \Omega \bigr\}$.
The orthogonal projector onto $T_{x}\mathcal{M}_{t}$, $x \in \mathcal{M}_{t}$, is denoted by $\mathrm{P}_{\mathcal{M}}$ and is given by
\begin{equation}
	\mathrm{P}_{\mathcal{M}}(t, x) = \Id - \mathbf{\hat{N}}(t, x) \mathbf{\hat{N}}(t, x)^{\top} \in \R^{3 \times 3}.
\label{eq:orthogonalprojector}
\end{equation}

The set
\begin{equation}
	\{ \partial_{1} \y(t, \xi), \partial_{2} \y(t, \xi) \}
\label{eq:coordbasis}
\end{equation}
forms a basis for the tangent space $T_{\y(t, \xi)}\mathcal{M}_{t}$ at $\y(t, \xi) \in \mathcal{M}_{t}$.
As a consequence, a tangent vector $\mathbf{\hat{v}} \in T_{\y(t, \xi)}\mathcal{M}_{t}$ can uniquely be represented as $\mathbf{\hat{v}} = \sum_{i=1}^{2} v^{i} \partial_{i} \y$, with $\mathbf{v} = (v^{1}, v^{2})^{\top} \in \R^{2}$ being its coordinate representation.
The elements $v^{i}$ are called components of $\mathbf{\hat{v}}$.
For a tangent vector field $\mathbf{\hat{v}}$ we define its smooth extension $\mathbf{\bar{v}}$ to $\R \setminus \{ 0 \}$ component-wise and analogous to~\eqref{eq:extension}.

In the following we will use Einstein summation convention and sum over each index letter appearing exactly twice in an expression, one as a sub- and once as a superscript.
For instance, we will write $\mathbf{\hat{v}} = v^{i} \partial_{i}\y$ for the sake of brevity.
As a further notational convection, boldface letters are used to denote vector fields.
In particular, lower case boldface letters refer to tangent vector fields, whereas upper case boldface letters refer to general vector fields in $\R^{3}$, with the exception of the parametrisations $\x$ and $\y$.
Moreover, we will drop arguments, such as $(t, \xi)$ or $(t, x)$, whenever clear from the context.

The elements of \eqref{eq:coordbasis} form the gradient matrix
\begin{equation*}
\begin{aligned}
	D\y & = \begin{pmatrix}
		\partial_{1} \y & \partial_{2} \y
	\end{pmatrix} \\
	& = \begin{pmatrix}
		(\partial_{1} \rho)\x & (\partial_{2} \rho)\x
	\end{pmatrix} + \rho D\x \in \R^{3 \times 2},
\end{aligned}
\end{equation*}
with $D\x = \begin{pmatrix} \partial_{1} \x & \partial_{2} \x \end{pmatrix}$ being the gradient matrix associated with the parametrisation $\x$.
See \cite[Sec.~2.1]{LanSch17} for the derivation.
The positive definite matrix $(g_{ij}) = D\y^{\top}D\y$ is commonly referred to as \emph{Riemannian metric} and is given by
\begin{equation*}
	g = \begin{pmatrix}
		(\partial_{1} \rho)^{2} & \partial_{1} \rho \partial_{2} \rho \\
		\partial_{2} \rho \partial_{1} \rho & (\partial_{2} \rho)^{2}
	\end{pmatrix} + \rho^{2} D\x^{\top}D\x.
\end{equation*}
The elements of its inverse $g^{-1}$ are denoted by $(g^{ij})$.
Both are tensors and obey a transformation law when changing from one basis to another.
To this end, let $\{\mathbf{\hat{e}}_{1}, \mathbf{\hat{e}}_{2} \}$ be an arbitrary basis of $T_{x}\mathcal{M}_{t}$ at $x \in \mathcal{M}_{t}$ such that
\begin{equation}
	\mathbf{\hat{e}}_{i} = \alpha_{i}^{j} \partial_{j} \y.
\label{eq:changeofbasis}
\end{equation}
Moreover, let $(\alpha^{-1})_{i}^{j}$ be the inverse of the matrix $(\alpha_{i}^{j})$.
Then, for a $(p, q)$-tensor $T_{j_{1}, \dots, j_{q}}^{i_{1}, \dots, i_{p}}$ of order $p+q$ which is defined in the basis $\{ \partial_{1} \y, \partial_{2} \y \}$, its representation in the basis $\{\mathbf{\hat{e}}_{1}, \mathbf{\hat{e}}_{2} \}$ is given by
\begin{equation}
	\mathfrak{T}_{j_{1}', \dots, j_{q}'}^{i_{1}', \dots, i_{p}'} = (\alpha^{-1})_{i_{1}}^{i_{1}'} \dots (\alpha^{-1})_{i_{p}}^{i_{p}'} T_{j_{1}, \dots, j_{q}}^{i_{1}, \dots, i_{p}} \alpha_{j_{1}'}^{j_{1}} \dots \alpha_{j_{q}'}^{j_{q}}.
\label{eq:translaw}
\end{equation}
See e.g. \cite{Lee97} for details.

The surface gradient of a function $\hat{f}$, as given in \eqref{eq:coordrepr}, is defined by
\begin{equation*}
	\nabla_{\mathcal{M}} \hat{f} \coloneqq \mathrm{P}_{\mathcal{M}} \nabla_{\R^{3}} \bar{f},
\end{equation*}
where $\nabla_{\R^{3}}$ is the usual gradient of the embedding space and $\bar{f}$ is the extension defined in \eqref{eq:extension}.
In particular, for $\mathcal{M}_{t} = \mathcal{S}^{2}$ we have $\nabla_{\mathcal{S}^{2}} \tilde{f} = \nabla_{\R^{3}} \bar{f}$.
Moreover, for a tangent vector $\mathbf{\hat{v}} = v^{i} \partial_{i} \y \in T_{x} \mathcal{M}_{t}$, $x \in \mathcal{M}_{t}$, we have
\begin{equation}
	\nabla_{\mathcal{M}} \hat{f} \cdot \mathbf{\hat{v}} = v^{i} \partial_{i} f,
\label{eq:surfgradip}
\end{equation}
see \cite[Sec.~2.1]{LanSch17}.

For a function $\tilde{f}: \mathcal{S}^{2} \to \R$ we define the spherical Laplace-Beltrami in accordance to the surface gradient as
\begin{equation}
	\Delta_{\mathcal{S}^{2}} \tilde{f} = - \Delta_{\R^{3}} \bar{f},
\label{eq:laplacebeltrami}
\end{equation}
where $\Delta_{\R^{3}}$ denotes the Laplacian of $\R^{3}$ and $\bar{f}$ is the extension defined in \eqref{eq:extension}.

For an arbitrary surface $\mathcal{M}_{t} \subset \R^{3}$ embedded in 3-space, the total curvature, which is twice the mean curvature, is defined as
\begin{equation}
	K = - \nabla_{\R^{3}} \cdot \mathbf{\hat{N}}.
\label{eq:totalcurv}
\end{equation}
A numerically convenient representation is
\begin{equation*}
	K = \Tr\left( \begin{pmatrix}
		\partial_{11} \y \cdot \mathbf{\hat{N}} & \partial_{12} \y \cdot \mathbf{\hat{N}} \\
		\partial_{21} \y \cdot \mathbf{\hat{N}} & \partial_{22} \y \cdot \mathbf{\hat{N}}
	\end{pmatrix} g^{-1} \right),
\end{equation*}
where $\Tr$ denotes the trace of a matrix.
See \cite[Chap.~8]{Lee97} for details.

Naturally, the chosen parametrisation $\y$ of $\mathcal{M}_{t}$ admits a smooth map $\tilde{\phi}(t, \cdot): \mathcal{S}^{2} \to \mathcal{M}_{t}$ of the form
\begin{equation*}
	\tilde{\phi}(t, x): x \mapsto \tilde{\rho}(t, x)x.
\end{equation*}
The differential $D\tilde{\phi}(t, x): T_{x}\mathcal{S}^{2} \to T_{\tilde{\phi}(t, x)}\mathcal{M}_{t}$ of $\tilde{\phi}$ is a linear map given by
\begin{equation}
	D\tilde{\phi}(t, x) = \tilde{\rho}(t, x)\mathrm{Id} + x \nabla_{\mathcal{S}^{2}} \tilde{\rho}(t, x)^{\top} \in \R^{3 \times 3},
\label{eq:dphi}
\end{equation}
see \cite[Sec.~2.1]{LanSch17} for its derivation.
It provides a unique identification of a tangent vector field $\mathbf{\tilde{v}}$ on $\mathcal{S}^{2}$ with a tangent vector field $\mathbf{\hat{v}} = D\tilde{\phi}(\mathbf{\tilde{v}})$ on $\mathcal{M}_{t}$ and for $\mathbf{\tilde{v}} = v^{i} \partial_{i} \x$ we have $\mathbf{\hat{v}} = v^{i} \partial_{i} \y$.
In other words, the differential acts solely on the tangent basis, cf. \cite[Sec.~2.1]{LanSch17}.

For $t \in I$, the surface integral of a function $\hat{f}$, as defined in \eqref{eq:coordrepr}, is given by
\begin{equation}
	\int_{\mathcal{M}_{t}} \hat{f} \; d\mathcal{M}_{t} = \int_{\Omega} f J\y \; d\xi,
\label{eq:surfintegralcoord}
\end{equation}
where $(J\y)^{2} = \det(g)$ is the Jacobian of $\y$.
See Thm.~3 in \cite[p.~88]{DoC92}.
Moreover, by \cite[Lemma~2.1]{LanSch17}, we have
\begin{equation}
	\int_{\mathcal{M}_{t}} \hat{f} \; d\mathcal{M}_{t} = \int_{\mathcal{S}^{2}} \tilde{f} \tilde{\rho} \sqrt{\norm{\nabla_{\mathcal{S}^{2}} \tilde{\rho}}^{2} + \tilde{\rho}^{2}} \; d\mathcal{S}^{2}.
\label{eq:surfintegral}
\end{equation}

For further details on the concepts discussed above we refer the reader to standard differential geometry books, such as \cite{Car76, Car92, Lee97, Lee13}.

\subsection{Vectorial Sobolev Spaces on Manifolds} \label{sec:background:sobolevspaces}

In the following, we consider $t \in I$ and $x \in \mathcal{M}_{t}$ arbitrary but fixed.
Recall that $\mathbf{\bar{v}}$ denotes the component-wise extension~\eqref{eq:extension} of a tangent vector field $\mathbf{\hat{v}}$ on $\mathcal{M}_{t}$.
We define the covariant derivative of $\mathbf{\hat{v}}$ at a point $x \in \mathcal{M}_{t}$ along a tangent vector $\mathbf{\hat{u}} \in T_{x}\mathcal{M}_{t}$ as
\begin{equation*}
	\nabla_{\mathbf{\hat{u}}} \mathbf{\hat{v}}(t, x) \coloneqq \bigl( \mathrm{P}_{\mathcal{M}} \nabla_{\R^{3}} \mathbf{\bar{v}}(\mathbf{\hat{u}}) \bigr)(t, x).
\end{equation*}

In particular, for $\mathbf{\hat{u}} = \partial_{i} \y$ being an element of the coordinate basis \eqref{eq:coordbasis} and $\mathbf{\hat{v}} = v^{j} \partial_{j} \y$, it reads in terms of coordinates
\begin{equation*}
	\nabla_{\partial_{i} \y} \mathbf{\hat{v}} = \bigl( \partial_{i} v^{j} + v^{k} \Gamma_{ik}^{j} \bigr) \partial_{j} \y,
\end{equation*}
see e.g. \cite[Lemma 4.3]{Lee97}.
Here, $\Gamma_{ik}^{j}$ denote the Christoffel symbols with regard to the coordinate basis, that is, $\nabla_{\partial_{i} \y} \partial_{k} \y = \Gamma_{ik}^{j} \partial_{j} \y$.
Let us denote the above coefficients by
\begin{equation}
	D_{i}v^{j} \coloneqq \partial_{i} v^{j} + v^{k} \Gamma_{ik}^{j}.
\label{eq:covderivcoeff}
\end{equation}
They obey the usual tensorial transformation law \eqref{eq:translaw}, see. e.g. \cite[Lemma 4.7]{Lee97}.

Furthermore, the covariant derivative is a linear operator $\nabla \mathbf{\hat{v}}(t, x): T_{x}\mathcal{M}_{t} \to T_{x}\mathcal{M}_{t}$ and its Hilbert-Schmidt norm is given by
\begin{equation}
	\norm{\nabla \mathbf{\hat{v}}(t, x)}_{2}^{2} = \sum_{i=1}^{2} \norm{\nabla_{\mathbf{\hat{e}}_{i}} \mathbf{\hat{v}}(t, x)}^{2},
\label{eq:hsnorm}
\end{equation}
where $\{ \mathbf{\hat{e}}_{1}, \mathbf{\hat{e}}_{2} \}$ is an arbitrary orthonormal basis of the tangent space $T_{x}\mathcal{M}_{t}$.
We highlight that \eqref{eq:hsnorm} is invariant with regard to the chosen parametrisation $\y$.
The following lemma provides a convenient way for its computation:
\begin{lemma} \label{lem:covderiv}
Let $t \in I$ and $x = \y(t, \xi)$ be arbitrary for some $\xi \in \Omega$.
Then, for $\mathbf{\hat{v}} = v^{i} \partial_{i} \y$, it holds that
\begin{equation}
	\norm{\nabla \mathbf{\hat{v}}}_{2}^{2} = g_{k\ell} g^{ij} D_{i}v^{k} D_{j}v^{\ell},
\label{eq:covderiv}
\end{equation}
where we have omitted the arguments $(t, x)$ on the left-hand and $(t, \xi)$ on the right-hand side.
\end{lemma}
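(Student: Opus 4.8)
The plan is to start from the definition \eqref{eq:hsnorm} of the Hilbert-Schmidt norm in terms of an arbitrary orthonormal basis $\{\mathbf{\hat{e}}_1, \mathbf{\hat{e}}_2\}$ of $T_x\mathcal{M}_t$ and to rewrite everything in terms of the coordinate basis $\{\partial_1 \y, \partial_2 \y\}$ and the metric $g$. The only two ingredients I expect to need are the coordinate expression $\nabla_{\partial_i \y}\mathbf{\hat{v}} = D_i v^j \partial_j \y$, with $D_i v^j$ as in \eqref{eq:covderivcoeff}, together with the change-of-basis relation \eqref{eq:changeofbasis}, i.e.\ $\mathbf{\hat{e}}_a = \alpha_a^i \partial_i \y$.

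First I would record the consequence of orthonormality. Since $\langle \mathbf{\hat{e}}_a, \mathbf{\hat{e}}_b \rangle = \delta_{ab}$ and the coordinate basis vectors satisfy $\langle \partial_i \y, \partial_j \y\rangle = g_{ij}$, the relation $\mathbf{\hat{e}}_a = \alpha_a^i \partial_i \y$ yields $\alpha_a^i \alpha_b^j g_{ij} = \delta_{ab}$. Reading this as the matrix identity $\alpha g \alpha^\top = \Id$ and inverting gives the crucial contraction $\sum_{a} \alpha_a^i \alpha_a^j = g^{ij}$, which expresses the inverse metric through the change-of-basis coefficients.

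Next, using linearity of the covariant derivative in its direction argument, $\nabla_{\mathbf{\hat{e}}_a}\mathbf{\hat{v}} = \alpha_a^i \nabla_{\partial_i \y}\mathbf{\hat{v}} = \alpha_a^i D_i v^k \partial_k \y$. Expanding the squared norm of this tangent vector with the metric gives $\norm{\nabla_{\mathbf{\hat{e}}_a}\mathbf{\hat{v}}}^2 = \alpha_a^i \alpha_a^j D_i v^k D_j v^\ell g_{k\ell}$. Summing over $a \in \{1,2\}$ and substituting the contraction $\sum_a \alpha_a^i \alpha_a^j = g^{ij}$ from the previous step collapses the change-of-basis coefficients and produces precisely $g_{k\ell} g^{ij} D_i v^k D_j v^\ell$, which is \eqref{eq:covderiv}.

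The computation is essentially bookkeeping, so I do not anticipate a genuine obstacle; the main point requiring care is the index placement in the orthonormality identity and the correct identification $\sum_a \alpha_a^i \alpha_a^j = g^{ij}$ rather than $g_{ij}$, which hinges on inverting $\alpha g \alpha^\top = \Id$. It is also worth verifying consistency with the remark following \eqref{eq:hsnorm} that the result is independent of the chosen orthonormal basis: since the final expression involves only $g$, $g^{-1}$, and the coefficients $D_i v^j$, and the latter transform according to the tensorial law \eqref{eq:translaw}, the right-hand side of \eqref{eq:covderiv} is manifestly invariant, which provides a convenient consistency check.
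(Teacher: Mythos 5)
Your proof is correct, and it takes a genuinely more direct route than the paper. The paper's proof runs in two stages: it first shows that the scalar $g_{k\ell} g^{ij} D_{i}v^{k} D_{j}v^{\ell}$ is invariant under an \emph{arbitrary} change of basis \eqref{eq:changeofbasis}, invoking the full tensorial transformation law $\mathfrak{D}_{i}\mathfrak{v}^{k} = (\alpha^{-1})_{s}^{k} D_{t}v^{s} \alpha_{i}^{t}$ for the covariant-derivative coefficients (via \cite[Lemma 4.7]{Lee97}) together with the transformation laws for $(g_{ij})$ and $(g^{ij})$, contracting away all factors of $\alpha$ and $\alpha^{-1}$; only then does it specialise to an orthonormal basis, where $\mathfrak{g} = (\delta_{ij})$ reduces the invariant to $\sum_{i,k} (\mathfrak{D}_{i}\mathfrak{v}^{k})^{2} = \norm{\nabla \mathbf{\hat{v}}}_{2}^{2}$. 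You instead run the computation in one pass in the opposite direction: expand definition \eqref{eq:hsnorm} directly, write $\nabla_{\mathbf{\hat{e}}_{a}} \mathbf{\hat{v}} = \alpha_{a}^{i} D_{i}v^{k} \partial_{k}\y$ using only linearity of the covariant derivative in its direction slot, and collapse the frame coefficients via $\sum_{a} \alpha_{a}^{i} \alpha_{a}^{j} = g^{ij}$, which you correctly extract by inverting the orthonormality relation $\alpha_{a}^{i} \alpha_{b}^{j} g_{ij} = \delta_{ab}$, i.e. $\alpha g \alpha^{\top} = \Id$ gives $g^{-1} = \alpha^{\top} \alpha$. This is shorter, never needs $\alpha^{-1}$ explicitly, and avoids the component transformation law for $D_{i}v^{k}$ altogether. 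What the paper's longer detour buys is the intermediate statement that the right-hand side of \eqref{eq:covderiv} is invariant under arbitrary (not merely orthonormal) changes of basis, which substantiates the parametrisation-independence highlighted after \eqref{eq:hsnorm}; in your write-up this appears only as the a posteriori consistency check in your closing remark, where you correctly observe that it follows anyway from the tensorial transformation of $D_{i}v^{k}$ via \eqref{eq:translaw}. Your care over the index placement (keeping the frame index $a$ unsummed until the explicit summation, and distinguishing $g^{ij}$ from $g_{ij}$ in the contraction identity) addresses exactly the point where a sloppier version of this argument would go wrong.
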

\begin{proof}
First, let us show that the right-hand side of \eqref{eq:covderiv} is parametrisation independent.
To this end, let $\{\partial_{1} \y, \partial_{2} \y\}$ and $\{\mathbf{\hat{e}}_{1}, \mathbf{\hat{e}}_{2}\}$ be arbitrary bases for $T_{x}\mathcal{M}_{t}$ such that its relation is given by \eqref{eq:changeofbasis}.
Then, by \cite[Lemma 4.7]{Lee97} and \eqref{eq:translaw}, we have the transformation law
\begin{equation*}
	\mathfrak{D}_{i}\mathfrak{v}^{k} = (\alpha^{-1})_{s}^{k} D_{t}v^{s} \alpha_{i}^{t}
\end{equation*}
for the components \eqref{eq:covderivcoeff} of the covariant derivative.
Moreover, $(g_{ij})$ and $(g^{ij})$ transform as $\mathfrak{g}_{k\ell} = g_{mn} \alpha_{k}^{m} \alpha_{\ell}^{n}$ and $\mathfrak{g}^{ij} = (\alpha^{-1})_{p}^{i} (\alpha^{-1})_{q}^{j} g^{pq}$, respectively.
Recall that, by definition, $\alpha_{i}^{k} (\alpha^{-1})_{k}^{j} = \delta_{i}^{j}$ and $(\alpha^{-1})_{i}^{k} \alpha_{k}^{j} = \delta_{i}^{j}$.
As a consequence,
\begin{equation*}
\begin{aligned}
	\mathfrak{g}_{k\ell} \mathfrak{g}^{ij} \mathfrak{D}_{i}\mathfrak{v}^{k} \mathfrak{D}_{j}\mathfrak{v}^{\ell} & = g_{mn} \alpha_{k}^{m} \alpha_{\ell}^{n} (\alpha^{-1})_{p}^{i} (\alpha^{-1})_{q}^{j} g^{pq} (\alpha^{-1})_{s}^{k} D_{t}v^{s} \alpha_{i}^{t} (\alpha^{-1})_{u}^{\ell} D_{w}v^{u} \alpha_{j}^{w} \\
	& = g_{mn} g^{pq} D_{t}v^{s} D_{w}v^{u} \delta_{s}^{m} \delta_{u}^{n} \delta_{p}^{t} \delta_{q}^{w} \\
	& = g_{mn} g^{pq} D_{p}v^{m} D_{q}v^{n}.
\end{aligned}
\end{equation*}
Suppose now that $\{\mathbf{\hat{e}}_{1}, \mathbf{\hat{e}}_{2}\}$ is orthonormal so that $\mathfrak{g} = (\delta_{ij})$ and $\mathfrak{g}^{-1} = (\delta^{ij})$.
Then,
\begin{equation*}
	\mathfrak{g}_{k\ell} \mathfrak{g}^{ij} \mathfrak{D}_{i}\mathfrak{v}^{k} \mathfrak{D}_{j}\mathfrak{v}^{\ell} = \sum_{i, k} (\mathfrak{D}_{i}\mathfrak{v}^{k})^{2} = \sum_{i} \norm{\nabla_{\mathbf{\hat{e}}_{i}} \mathbf{\hat{v}}}^{2} = \norm{\nabla \mathbf{\hat{v}}}_{2}^{2},
\end{equation*}
where the second equality follows from the fact that
\begin{equation*}
\begin{aligned}
	\sum_{i} \norm{\nabla_{\mathbf{\hat{e}}_{i}} \mathbf{\hat{v}}}^{2} & = \sum_{i} \mathfrak{D}_{i}\mathfrak{v}^{k} \mathbf{\hat{e}}_{k} \cdot \mathfrak{D}_{i}\mathfrak{v}^{\ell} \mathbf{\hat{e}}_{\ell} \\
	& = \sum_{i} \delta_{k\ell} \mathfrak{D}_{i}\mathfrak{v}^{k} \mathfrak{D}_{i}\mathfrak{v}^{\ell} \\
	& = \sum_{i, k} (\mathfrak{D}_{i}\mathfrak{v}^{k})^{2} \\
\end{aligned}
\end{equation*}
and the last equality is by definition \eqref{eq:hsnorm}.
Finally, the claim follows from combining these equations.
\end{proof}

We define for each $t \in I$ the Sobolev space $H^{1}(\mathcal{M}_{t}, T\mathcal{M}_{t})$ as the completion of $C^{\infty}(\mathcal{M}_{t}, T\mathcal{M}_{t})$ tangent vector fields with respect to
\begin{equation}
	\norm{\mathbf{\hat{v}}(t, \cdot)}_{H^{1}(\mathcal{M}_{t}, T\mathcal{M}_{t})}^{2} \coloneqq \int_{\mathcal{M}_{t}} \norm{\nabla \mathbf{\hat{v}}(t, x)}_{2}^{2} \; d\mathcal{M}_{t}.
\label{eq:h1norm}
\end{equation}
Let us add that \eqref{eq:h1norm} is a norm whenever $\mathcal{M}_{t}$ is diffeomorphic to the 2-sphere since, by virtue of the Hairy Ball Theorem, no covariantly constant tangent vector field but $\mathbf{\hat{v}} = 0$ exists, see e.g.~\cite[p.~125]{Hir94}.
We refer to \cite{Heb96, Tri92} for more details on Sobolev spaces on Riemannian manifolds.

For a tangent vector field $\mathbf{\hat{v}} = v^{i} \partial_{i} \y$, its surface divergence is defined as
\begin{equation}
	\nabla_{\mathcal{M}} \cdot \mathbf{\hat{v}} = \Tr(\nabla \mathbf{\hat{v}}) = \sum_{i = 1}^{2} \nabla_{\mathbf{\hat{e}}_{i}} \mathbf{\hat{v}} \cdot \mathbf{\hat{e}}_{i} = \mathfrak{D}_{i}\mathfrak{v}^{i},
\label{eq:surfdiv}
\end{equation}
where $\{\mathbf{\hat{e}}_{1}, \mathbf{\hat{e}}_{2}\}$ is an orthonormal basis of the tangent space and $\mathfrak{D}_{i}\mathfrak{v}^{i}$ is defined analogous to \eqref{eq:covderivcoeff}, see \cite{Lee97} for details.
\subsection{Compactly Supported Basis Functions} \label{sec:background:basisfun}

Let $h \in (0, 1)$ and let $k \in \N_{0} = \{0, 1, 2, \dots \}$.
Then, we define the one-dimensional piecewise polynomial function $b_{h}^{(k)}: [-1, 1] \to \R$ as
\begin{equation*}
	b_{h}^{(k)}(\tau) = \begin{cases}
		0 & \text{for } -1 \le \tau \le h, \\
		\frac{(\tau - h)^{k}}{(1-h)^{k}} & \text{for } h < \tau \le 1.
	\end{cases}
\end{equation*}
The parameter $h$ controls the support and $k$ is its degree.
For a point $x_{j} \in \mathcal{S}^{2}$ we define the $x_{j}$-zonal function
\begin{equation}
	\tilde{b}_{j}: \mathcal{S}^{2} \to \R, \quad x \mapsto b_{h}^{(k)}(x_{j} \cdot x),
\label{eq:zonalfunc}
\end{equation}
which, as a consequence, is compactly supported on $\mathcal{S}^{2}$.
See \cite{FreSchr95, Schr97} for further details.
Moreover, we define the tangent vector fields
\begin{equation}
\begin{aligned}
	\mathbf{\tilde{y}}_{j}^{(1)} & \coloneqq \nabla_{\mathcal{S}^{2}} \tilde{b}_{j}, \\
	\mathbf{\tilde{y}}_{j}^{(2)} & \coloneqq \nabla_{\mathcal{S}^{2}} \tilde{b}_{j} \times \mathbf{\tilde{N}}, \\
\end{aligned}
\label{eq:vecbasisfun}
\end{equation}
where $\mathbf{\tilde{N}}$ is the outward unit normal of $\mathcal{S}^{2}$.
See Fig.~\ref{fig:surfaces} for illustration.

\subsection{Scalar Spherical Harmonics} \label{sec:sphericalharmonics}

Let us consider the space of homogeneous harmonic polynomials in $\R^{3}$ which are of degree $n \in \N_{0}$.
We restrict their domain to the sphere $\mathcal{S}^{2}$ and denote this space by $\mathrm{Harm}_{n}$.
Then, by Thm.~5.6 in \cite[Sec.~5.1]{Mic13}, we have $\mathrm{dim}(\mathrm{Harm}_{n}) = 2n + 1$.

For $n \in \N_{0}$, an element $\tilde{Y}_{n} \in \mathrm{Harm}_{n}$ is an infinitely often differentiable eigenfunction of the Laplace-Beltrami operator $\Delta_{\mathcal{S}^{2}}$, as defined in \eqref{eq:laplacebeltrami}, and is referred to as a \emph{(scalar) spherical harmonic}.
Its corresponding eigenvalue is $\lambda_{n} = n(n + 1)$, see Lemma~5.8 in \cite{Mic13} for a proof.
Moreover, it holds that
\begin{equation}
	\langle \tilde{Y}_{n, j}, \tilde{Y}_{m, k} \rangle_{L^{2}(\mathcal{S}^{2})} = \delta_{mn} \delta_{jk},
\label{eq:spharmorthonormal}
\end{equation}
where $\langle \tilde{f}, \tilde{g} \rangle_{L^{2}(\mathcal{S}^{2})} \coloneqq \int_{\mathcal{S}^{2}} \tilde{f} \tilde{g} \; d\mathcal{S}^{2}$, cf. Thm.~5.9 in \cite{Mic13}.

The set $\{ \tilde{Y}_{n, j}: n \in \N_{0}, j = 1, \dots, 2n + 1 \}$ is a complete orthonormal system in $L^{2}(\mathcal{S}^{2})$ with respect to $\langle \cdot, \cdot \rangle_{L^{2}(\mathcal{S}^{2})}$.
As a consequence, every function $\tilde{f} \in L^{2}(\mathcal{S}^{2})$ can be uniquely expanded in its Fourier series representation as
\begin{equation*}
	\tilde{f} = \sum_{n = 0}^{\infty} \sum_{j = 1}^{2n + 1} \langle \tilde{f}, \tilde{Y}_{n, j} \rangle_{L^{2}(\mathcal{S}^{2})} \tilde{Y}_{n,j}.
\end{equation*}
See Thm.~5.25 in \cite{Mic13} for the details.
In this article, we will assume that $\tilde{Y}_{n, j} \in \mathrm{Harm}_{n}$ denote \emph{fully normalised spherical harmonics}, see \cite[Sec.~5.2]{Mic13} for their construction.
By Parseval's identity, we furthermore have
\begin{equation*}
	\norm{\tilde{f}}_{L^{2}(\mathcal{S}^{2})}^{2} = \sum_{n, j} \langle \tilde{f}, \tilde{Y}_{n, j} \rangle_{L^{2}(\mathcal{S}^{2})}^{2}.
\end{equation*}
Again, see Thm.~5.25 in \cite{Mic13}.

We define the Sobolev space $H^{r}(\mathcal{S}^{2})$ for arbitrary $r \in \R$ by means of the completion of all $C^{\infty}(\mathcal{S}^{2})$ functions with respect to the norm
\begin{equation*}
	\norm{\tilde{f}}_{H^{r}(\mathcal{S}^{2})}^{2} \coloneqq \norm{(\Delta_{\mathcal{S}^{2}} + 1)^{r/2} \tilde{f}}_{L^{2}(\mathcal{S}^{2})}^{2} = \sum_{n, j} (\lambda_{n} + 1)^{r} \langle \tilde{f}, \tilde{Y}_{n, j} \rangle_{L^{2}(\mathcal{S}^{2})}^{2}.
\end{equation*}
For $r \in \R$, we define the $H^{r}(\mathcal{S}^{2})$ seminorm of order $r$ by
\begin{equation}
	\abs{\tilde{f}}_{H^{r}(\mathcal{S}^{2})}^{2} \coloneqq \norm{\Delta_{\mathcal{S}^{2}}^{r/2} \tilde{f}}_{L^{2}(\mathcal{S}^{2})}^{2} = \sum_{n, j} \lambda_{n}^{r} \langle \tilde{f}, \tilde{Y}_{n, j} \rangle_{L^{2}(\mathcal{S}^{2})}^{2}.
\label{eq:sobolevseminormsphere}
\end{equation}
\section{Problem Formulation} \label{sec:model}

Let us consider an evolving sphere-like surface
\begin{equation}
	\mathcal{M} \coloneqq \bigcup_{t \in I} \bigl( \{ t \} \times \mathcal{M}_{t} \bigr) \subset \R^{4}
\label{eq:evolvsurf}
\end{equation}
which is specified in terms of a parametrisation $\y: I \times \Omega \to \R^{3}$ as in \eqref{eq:param}.
Every choice of $\y$ gives rise to a surface velocity
\begin{equation}
	\mathbf{\hat{V}}(t, x) = \partial_{t} \y(t, \xi) \in \R^{3},
\label{eq:surfvel}%
\end{equation}
where $\xi = \y^{-1}(t, x)$.
We stress that the velocity $\mathbf{\hat{V}}$ depends on the chosen parametrisation $\y$ of which, in general, infinitely many exist.
However, its (scalar) normal component, given by
\begin{equation*}
	V = \mathbf{\hat{V}} \cdot \mathbf{\hat{N}},
\end{equation*}
is intrinsic and thus independent of the choice of $\y$, see e.g.~\cite[Prop.~1]{KirLanSch15}.
As a consequence, \eqref{eq:surfvel} can be represented as
\begin{equation}
	\mathbf{\hat{V}} = V\mathbf{\hat{N}} + \mathbf{\hat{v}},
\label{eq:surfveldecomp}
\end{equation}
where $V\mathbf{\hat{N}}$ is the normal velocity and $\mathbf{\hat{v}}$ is a vector field tangent to $\mathcal{M}_{t}$, $t \in I$.

In the following, we consider smooth trajectories of moving particles (or cells) which always stay on the evolving surface.
To this end, we assume the existence of a Lagrangian specification
\begin{equation}
	\gamma(\cdot, x): t \mapsto \gamma(t, x) \in \mathcal{M}_{t}, \quad \gamma(0, \cdot) = \Id
\label{eq:trajectory}
\end{equation}
of the path of a particle which starts at $x \in \mathcal{M}_{0}$ and always stays on the surface.
Expressing \eqref{eq:trajectory} with the help of a coordinate representation $\beta: I \times \Omega \to \Omega$ requires that
\begin{equation}
	\gamma(t, \y(0, \xi)) = \y(t, \beta(t, \xi)), \quad \beta(0, \cdot) = \Id
\label{eq:trajectoryparam}
\end{equation}
holds for all $(t \times \xi) \in I \times \Omega$.
As a consequence of~\eqref{eq:trajectoryparam} and with the help of \eqref{eq:surfvel} we find that
\begin{equation}
\begin{aligned}
	\partial_{t} \gamma & = \partial_{t} \y + \partial_{t} \beta^{i} \partial_{i} \y, \\
	& = \mathbf{\hat{V}} + \mathbf{\hat{w}},
\end{aligned}
\label{eq:velocity}
\end{equation}
where $\mathbf{\hat{w}} = \partial_{t} \beta^{i} \partial_{i} \y$ is a purely tangential velocity.
Therefore, the velocity of a particle moving along \eqref{eq:trajectory} can be decomposed into the surface velocity $\mathbf{\hat{V}}$, which is prescribed by the chosen parametrisation $\y$, and a tangential part $\mathbf{\hat{w}}$ relative to it.
See Fig.~\ref{fig:sketch} for a sketch.

As a consequence of \eqref{eq:trajectory} and \eqref{eq:trajectoryparam} we infer that the normal part of the velocity of a particle following $\gamma$ equals the normal velocity of the surface movement.
In other words,
\begin{align*}
	\partial_{t} \gamma \cdot \mathbf{\hat{N}} & = (\mathbf{\hat{V}} + \mathbf{\hat{w}}) \cdot \mathbf{\hat{N}} \\
	& = \mathbf{\hat{V}} \cdot \mathbf{\hat{N}} \\
	& = V.
\end{align*}

Suppose now that the evolving surface \eqref{eq:evolvsurf} is embedded in a fluid which moves with a velocity $\mathbf{U}(t, x) \in \R^{3}$, $x \in \R^{3}$.
For $t \in I$ and $x \in \mathcal{M}_{t}$, we denote the restriction of $\mathbf{U}(t, x)$ to the surface $\mathcal{M}_{t}$ by $\mathbf{\hat{U}}(t, x)$.
We stress that this fluid velocity is in general different from the surface velocity $\mathbf{\hat{V}}$, defined in~\eqref{eq:surfvel}.

In the following we assume that a particle of interest following~\eqref{eq:trajectory} convects with this fluid.
In other words, for $t \in I$ and $x \in \mathcal{M}_{t}$ we require that
\begin{equation}
	\mathbf{\hat{U}}(t, x) = \partial_{t} \gamma(t, \gamma^{-1}(t, x)).
\label{eq:trajectoryvel}
\end{equation}
From \eqref{eq:velocity} and \eqref{eq:surfveldecomp} we find that
\begin{equation}
\begin{aligned}
	\mathbf{\hat{U}} & = \mathbf{\hat{V}} + \mathbf{\hat{w}} \\
	& = V \mathbf{\hat{N}} + \mathbf{\hat{v}} + \mathbf{\hat{w}}.
\end{aligned}
\label{eq:decomp}
\end{equation}
Therefore, the surface \eqref{eq:evolvsurf} must evolve with (scalar) normal velocity $V = \mathbf{\hat{U}} \cdot \mathbf{\hat{N}}$.
Since the fluid velocity $\mathbf{\hat{U}}$ can uniquely be decomposed into a normal and a tangential part, we conclude that the latter is given by
\begin{equation}
	\mathbf{\hat{u}} = \mathbf{\hat{v}} + \mathbf{\hat{w}}.
\label{eq:tangentialmotion}
\end{equation}

The primary goal of this article is to estimate the motion of cells as they move along trajectories \eqref{eq:trajectory} through Euclidean 3-space.
The main assumption is that they form a surface structure which is deforming over time and can be estimated from image data $\hat{f}$.
Hence, we focus on estimating $\mathbf{\hat{U}}$ rather than $\mathbf{U}$ and utilise the fact that the unknown can be decomposed as in \eqref{eq:decomp}.

In the following we discuss two conceptually different ways of estimating the tangential part of the particle motion.
One is based on conservation of the data $\hat{f}$ along paths \eqref{eq:trajectory} and leads to a generalised optical flow equation.
Given $\hat{f}$ and a surface velocity $\mathbf{\hat{V}}$, one tries to compute a tangential vector field $\mathbf{\hat{w}}$ relative to it.
This precise approach has been pursued already in \cite{KirLanSch13, KirLanSch15, LanSch17}.

The other idea is based on conservation of mass and leads to a suitable generalisation of the continuity equation to evolving surfaces.
Given $\hat{f}$ and only the normal component $V$ of the surface velocity, one directly tries to infer the entire tangential part $\mathbf{\hat{u}}$ of the particle motion.

The main differences are as follows.
First, they differ in the assumptions imposed on $\hat{f}$.
One assumes conservation of brightness whereas the other assumes conservation of mass.
Second, in the former the unknown is $\mathbf{\hat{w}}$, whereas in the latter the unknown is $\mathbf{\hat{u}}$.
Third, as we employ a variational approach, they differ in their regularity assumptions.
The first approach desires regularity of $\mathbf{\hat{w}}$, which depends on the tangential part of the imposed surface velocity $\mathbf{\hat{V}}$, whereas the second enforces regularity of the tangential part $\mathbf{\hat{u}}$ of the desired motion.

\subsection{Conservation of Brightness} \label{sec:brightness}

Let us be given a function $\hat{f}$ such that, for time $t \in I$,
\begin{equation*}
	\hat{f}(t, \cdot): \mathcal{M}_{t} \to \R
\end{equation*}
is an image on the surface $\mathcal{M}_{t}$.
In this section we assume that, along a smooth trajectory \eqref{eq:trajectory}, this data $\hat{f}$ satisfies
\begin{equation}
	\hat{f}(t, \gamma(t, x)) = \hat{f}(0, x)
\label{eq:bca}
\end{equation}
for all $t \in I$ and all $x \in \mathcal{M}_{0}$.
Typically, this constraint is termed \emph{brightness constancy assumption} and is the basis for many motion estimation methods.

In order to linearise \eqref{eq:bca} by differentiation with respect to time, one may consider temporal derivatives along trajectories, see \cite{KirLanSch13, KirLanSch15}.
To this end, we define the time derivative of $\hat{f}$ along a trajectory $\psi: t \mapsto \psi(t) \in \mathcal{M}_{t}$ at $x_{0} = \psi(t_{0})$ as
\begin{equation}
	d_{t}^{\partial_{t} \psi} \hat{f}(t_{0}, x_{0}) \coloneqq \frac{d}{dt} \hat{f}(t, \psi(t)) \bigg\vert_{t = t_{0}}.
\label{eq:timederiv}
\end{equation}
In further consequence, the time derivative of $\hat{f}$ at $x_{0} = \y(t_{0}, \xi)$ along the parametrisation $\y(\cdot, \xi)$ is defined analogously as
\begin{equation}
	d_{t}^{\mathbf{\hat{V}}} \hat{f}(t_{0}, x_{0}) \coloneqq \frac{d}{dt} \hat{f}(t, \y(t, \xi)) \bigg\vert_{t = t_{0}}.
\label{eq:paramtimederiv}
\end{equation}
For a trajectory $\psi_{\mathbf{\hat{N}}}$ that passes through $x_{0} \in \mathcal{M}_{t_{0}}$ at time $t_{0}$ and for which $\partial_{t} \psi_{\mathbf{\hat{N}}}(t_{0})$ is orthogonal to $T_{x_{0}}\mathcal{M}_{t_{0}}$, the so-called \emph{normal time derivative} of $\hat{f}$ is defined as
\begin{equation}
	d_{t}^{\mathbf{\hat{N}}} \hat{f}(t_{0}, x_{0}) \coloneqq \frac{d}{dt} \hat{f}(t, \psi_{\mathbf{\hat{N}}}(t)) \bigg\vert_{t = t_{0}}.
\label{eq:normaltimederiv}
\end{equation}
The relation between \eqref{eq:timederiv} and \eqref{eq:normaltimederiv} is given by
\begin{equation}
	d_{t}^{\partial_{t} \psi} \hat{f} = d_{t}^{\mathbf{\hat{N}}} \hat{f} + \nabla_{\mathcal{M}} \hat{f} \cdot \partial_{t} \psi.
\label{eq:timederivrelation}
\end{equation}
See \cite[Sec.~3.3]{CerFriGur05} for the details.
Figure~\ref{fig:sketch} shows a sketch of the different trajectories introduced above and their velocities.

\begin{figure}[t]
	\begin{center}
	\begin{tikzpicture}[scale=0.8]
		\draw [thick, gray] (-3,0) to [out=30,in=150] (3,0);
		\node [right] at (3,0) {$\mathcal{M}_{t_0}$};		
		\draw [thick, gray] (-3,1) to [out=40,in=150] (4,1.8);
		\node [below] at (4,1.8) {$\mathcal{M}_{t_0+\Delta t}$};
		\draw [-stealth', gray, thick] (0,.9) -- (.91*1.1,.91*2.6);
		\node [left, gray] at (.91,.91*2.5) {$\mathbf{\hat{V}}$};
		\draw [-stealth', gray, thick] (0,.9) -- (0,2.4);
		\draw [-stealth', gray, thick] (0,.9) -- (.94*2.5,.94*2.4);
		\node [below, gray] at (.98*2.5,.94*2.4) {$\mathbf{\hat{U}}$};
		\draw [-stealth', gray, thick] (0,.9) -- (.94*2.5-.91*1.1, 0.9);
		\node [right, gray] at (0.94*2.5-0.91*1.1, 0.9) {$\mathbf{\hat{w}}$};
		\draw [-stealth', thick, dotted] (0,0) to [out=90,in=225] (0,.9) to [out=45,in=270] (1.3,1.7) to [out=90,in=225] (2.5,2.4) to [out=45,in=225] (1.25*2.5,1.25*2.4);
		\node [right] at (1.3*2.5,1.3*2.4) {$\gamma(\cdot, x)$};
		\draw [-stealth', thick] (0.5,0) to [out=135,in=270] (0,.9) to [out=90,in=300] (-.5,1.25*2.4);
		\node [above] at (-.5,1.25*2.4) {$\psi_{\mathbf{\hat{N}}}$};
		\draw [-stealth', thick, dashed] (-0.5,0) to [out=45,in=225] (0,.9) to [out=45,in=270] (1.5,3.1);
		\node [above] at (1.5,3.1) {$\y(\cdot, \xi)$};
		\node at (0.4, 0.6) {$x_{0}$};
	\end{tikzpicture}
	\end{center}
	\caption{Sketch of various trajectories following the evolving surface. The corresponding velocities are depicted in grey. The velocity $\mathbf{\hat{U}}$ of a cell following $\gamma$ is composed of the surface velocity $\mathbf{\hat{V}}$ and a tangential velocity $\mathbf{\hat{w}}$.}
	\label{fig:sketch}
\end{figure}
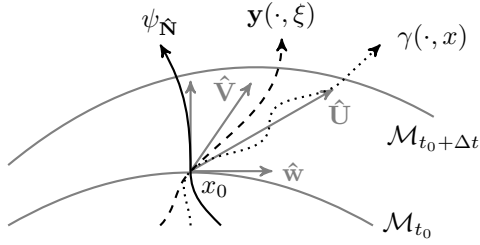

Recall that by assumption \eqref{eq:trajectoryvel} we have $\mathbf{\hat{U}} = \partial_{t} \gamma$.
With the help of definition \eqref{eq:timederiv} and relation \eqref{eq:timederivrelation} we can immediately recast assumption \eqref{eq:bca} and demand that along a trajectory $\gamma$, as defined in \eqref{eq:trajectory}, we must have
\begin{equation}
	d_{t}^{\mathbf{\hat{U}}} \hat{f} = d_{t}^{\mathbf{\hat{N}}} \hat{f} + \nabla_{\mathcal{M}} \hat{f} \cdot \mathbf{\hat{U}} \overset{\mathclap{!}}{=} 0.
\label{eq:gofe}
\end{equation}
However, this so-called \emph{generalised optical flow equation} is inconvenient from a numerical perspective, as $d_{t}^{\mathbf{\hat{N}}} \hat{f}$ typically is unknown or hard to estimate from real data.
As a remedy, they propose in \cite[Lemma~2]{KirLanSch15} to use
\begin{equation*}
\begin{aligned}
	d_{t}^{\mathbf{\hat{N}}} \hat{f} + \nabla_{\mathcal{M}} \hat{f} \cdot \mathbf{\hat{U}} \; & \overset{\mathclap{\eqref{eq:decomp}}}{=} d_{t}^{\mathbf{\hat{N}}} \hat{f} + \nabla_{\mathcal{M}} \hat{f} \cdot (\mathbf{\hat{V}} + \mathbf{\hat{w}}) \\
	& \overset{\mathclap{\eqref{eq:timederivrelation}}}{=} d_{t}^{\mathbf{\hat{V}}} \hat{f} - \nabla_{\mathcal{M}} \hat{f} \cdot \mathbf{\hat{V}} + \nabla_{\mathcal{M}} \hat{f} \cdot (\mathbf{\hat{V}} + \mathbf{\hat{w}}) \\
	& = d_{t}^{\mathbf{\hat{V}}} \hat{f} + \nabla_{\mathcal{M}} \hat{f} \cdot \mathbf{\hat{w}},
\end{aligned}
\end{equation*}
which is a parametrised version of \eqref{eq:gofe} and is referred to as \emph{parametrised optical flow equation}.
We highlight that the unknown $\mathbf{\hat{w}}$ depends exclusively on the imposed surface velocity $\mathbf{\hat{V}}$.

Computing the optical flow $\mathbf{\hat{w}}$ from data $\hat{f}$ constitutes an ill-posed inverse problem as the above equation is underdetermined and a unique solution is not guaranteed.
As a remedy, we minimise a Tikhonov-type functional consisting of a data term and a spatially varying regularisation term.

In what follows, we keep $t \in I$ arbitrary but fixed and seek a minimiser to the functional $\mathcal{E}: H^{1}(\mathcal{M}_{t}, T\mathcal{M}_{t}) \to [0, +\infty]$,
\begin{equation}
	\mathcal{E}(\mathbf{\hat{w}}) \coloneqq \norm{d_{t}^{\mathbf{\hat{V}}} \hat{f} + \nabla_{\mathcal{M}} \hat{f} \cdot \mathbf{\hat{w}}}_{L^{2}(\mathcal{M}_{t})}^{2} + \mathcal{R}(\mathbf{\hat{w}}),
\label{eq:offunctional}
\end{equation}
where, given a measurable function $s(t, \cdot): \mathcal{M}_{t} \to \{0, 1\}$, the regularisation functional $\mathcal{R}(\mathbf{\hat{w}})$ is defined as
\begin{equation}
	\mathcal{R}(\mathbf{\hat{w}}) \coloneqq \alpha_{0} \int_{\mathcal{M}_{t}} s \norm{\nabla \mathbf{\hat{w}}}_{2}^{2} \; d\mathcal{M}_{t} + \alpha_{1} \int_{\mathcal{M}_{t}} (1-s) \norm{\mathbf{\hat{w}}}^{2} \; d\mathcal{M}_{t},
\label{eq:regfunctional}
\end{equation}
and $\alpha_{0}, \alpha_{1} > 0$ are regularisation parameters.
Here, the function $s$ incorporates a-priori information about the support of the solution.
The idea is that minimisation of \eqref{eq:offunctional} with \eqref{eq:regfunctional} as regularisation functional favours tangent vector fields of certain regularity in areas where data is present but, on the other hand, prevents potentially undesired fill-in effects of quadratic regularisation in regions with no data.
In practice one may choose $s$ to be e.g. a segmentation of the fluorescently labelled cells or, due to the nature of the fluorescence microscopy data, one may as well choose $s \coloneqq \hat{f}$ with $\hat{f}(t, \cdot): \mathcal{M}_{t} \to [0, 1]$.

In this article, however, we restrict ourselves to functions $s(t, \cdot): \mathcal{M}_{t} \to (0, 1)$ as, by the equivalence of norms (see Sec.~\ref{sec:background:sobolevspaces}), coercivity of $\mathcal{E}$ with respect to $H^{1}(\mathcal{M}_{t}, T\mathcal{M}_{t})$ and thus well-posedness of the problem is guaranteed.
We refer to \cite{BauGraKir15} for further details.
For the actual choice of $s$ see Sec.~\ref{sec:experiments}.

Moreover, let us emphasise that \eqref{eq:offunctional} is a generalisation of the variational formulation used in \cite{LanSch17}, where only the Sobolev (semi-)norm \eqref{eq:h1norm} was used as regularisation functional.
It is immediately recovered by choosing $s \equiv 1$.

\subsection{Conservation of Mass}

Let us be given a time-evolving surface \eqref{eq:evolvsurf} which is migrating through a fluid defined in the ambient space.
We stress that, in general, this surface is \emph{non-material}.
In other words, the surface velocity $\mathbf{\hat{V}}$ induced by a chosen parametrisation of $\mathcal{M}$ is different from the fluid velocity $\mathbf{\hat{U}}$.

Furthermore, let us denote by
\begin{equation*}
	\hat{f}(t, \cdot): \mathcal{M}_{t} \to \R,
\end{equation*}
the density of the fluid restricted to the surface $\mathcal{M}_{t}$.
With the goal of estimating the fluid motion, we assume that this data $\hat{f}$ satisfies mass preservation.

In order to derive a suitable conservation law, let us consider an arbitrary evolving subsurface $\Gamma_{t} \subseteq \mathcal{M}_{t}$ of this surface.
For the sake of simplicity we will omit the index and write $\Gamma$, respectively $\partial \Gamma$ for the subsurface and its boundary.

The boundary curve $\partial \Gamma$ is oriented by its exterior unit normal field $\boldsymbol{\hat{\nu}}$.
Recall that $\boldsymbol{\hat{\nu}}$ is normal to $\partial \Gamma$ and tangent to $\mathcal{M}_{t}$.
We denote by $\mathbf{\hat{V}}_{\partial \Gamma} \in \R^{3}$ the velocity of the curve $\partial \Gamma$ as it moves through the embedding space.
Its intrinsic component, which is independent of the parametrisation of the curve $\partial \Gamma$, is denoted by
\begin{equation*}
	V_{\partial \Gamma} \coloneqq \mathbf{\hat{V}}_{\partial \Gamma} \cdot \boldsymbol{\hat{\nu}}.
\end{equation*}
Since by assumption $\Gamma \subseteq \mathcal{M}_{t}$, we deduce that
\begin{equation*}
	\mathbf{\hat{V}}_{\partial \Gamma} \cdot \mathbf{\hat{N}} = V.
\end{equation*}
In other words, $\Gamma$ and $\mathcal{M}_{t}$ evolve with equal normal velocities.
Furthermore, the \emph{normal migrational velocity} $V_{\partial \Gamma}^{\text{mig}}$ of the curve $\partial \Gamma$, as it travels through the fluid, is defined by
\begin{equation*}
	V_{\partial \Gamma}^{\text{mig}} \coloneqq (\mathbf{\hat{V}}_{\partial \Gamma} - \mathbf{\hat{U}}) \cdot \boldsymbol{\hat{\nu}}.
\end{equation*}

Given a fluid density $\hat{f}$ and an arbitrary evolving subsurface $\Gamma \subseteq \mathcal{M}_{t}$, the transport relation
\begin{equation}
	\frac{d}{dt} \int_{\Gamma} \hat{f} \; d\Gamma = \int_{\Gamma} \left( d_{t}^{\mathbf{\hat{N}}} \hat{f} + \nabla_{\mathcal{M}} \cdot (\hat{f} \mathbf{\hat{u}}) - \hat{f}KV \right) \; d\Gamma + \int_{\partial \Gamma} \hat{f} V_{\partial \Gamma}^{\text{mig}} \; d\Gamma
\label{eq:mbtransport}
\end{equation}
holds.
We refer to \cite[Sec.~4.2]{CerFriGur05} for the details.
Here, $\mathbf{\hat{u}}$ is the tangent part \eqref{eq:tangentialmotion} of the fluid velocity, $\nabla_{\mathcal{M}} \cdot (\hat{f} \mathbf{\hat{u}})$ denotes the surface divergence of $\hat{f} \mathbf{\hat{u}}$, and $K$ is the total curvature, see \eqref{eq:surfdiv} and \eqref{eq:totalcurv}, respectively.

Recall that at the beginning of this section we have assumed that the surface $\mathcal{M}_{t}$ evolves with (scalar) normal velocity $V = \mathbf{\hat{U}} \cdot \mathbf{\hat{N}}$, see \eqref{eq:trajectoryvel}.
In addition, let us suppose that $\Gamma$ is \emph{material}, meaning that it convects with the fluid.
In other words, it holds that
\begin{equation*}
	\mathbf{\hat{V}}_{\partial \Gamma} = \mathbf{\hat{U}}.
\end{equation*}
As a consequence, we have $V_{\partial\Gamma}^{\text{mig}} = 0$ and the transport relation \eqref{eq:mbtransport} simplifies to
\begin{equation*}
	\frac{d}{dt} \int_{\Gamma} \hat{f} \; d\Gamma = \int_{\Gamma} \left( d_{t}^{\mathbf{\hat{N}}} \hat{f} + \nabla_{\mathcal{M}} \cdot (\hat{f} \mathbf{\hat{u}}) - \hat{f}KV \right) \; d\Gamma.
\end{equation*}

Since $\Gamma$ is material, conservation of mass requires that
\begin{equation*}
	\frac{d}{dt} \int_{\Gamma} \hat{f} \; d\Gamma \overset{\mathclap{!}}{=} 0
\end{equation*}
and we obtain the relation
\begin{equation*}
	\int_{\Gamma} \left( d_{t}^{\mathbf{\hat{N}}} \hat{f} + \nabla_{\mathcal{M}} \cdot (\hat{f} \mathbf{\hat{u}}) - \hat{f}KV \right) \; d\Gamma = 0.
\end{equation*}
Since $\Gamma$ was arbitrary, this leads to the point-wise conservation law
\begin{equation}
	d_{t}^{\mathbf{\hat{N}}} \hat{f} + \nabla_{\mathcal{M}} \cdot (\hat{f} \mathbf{\hat{u}}) - \hat{f}KV = 0,
\label{eq:gce}
\end{equation}
which, following the terminology from before, resembles a \emph{generalised continuity equation}.
As for the generalised optical flow equation, we utilise relation \eqref{eq:timederivrelation} with time derivative \eqref{eq:paramtimederiv} and obtain a parametrised mass preservation constraint
\begin{equation}
	d_{t}^{\mathbf{\hat{V}}} \hat{f} + \nabla_{\mathcal{M}} \cdot (\hat{f} \mathbf{\hat{u}}) - \hat{f}KV - \nabla_{\mathcal{M}} \hat{f} \cdot \mathbf{\hat{v}} = 0,
\label{eq:pce}
\end{equation}
where $\mathbf{\hat{v}}$ is the tangent part of the surface velocity \eqref{eq:surfveldecomp}.
We refer to it as \emph{parametrised continuity equation}.

Let us mention that, with the help of \eqref{eq:surfveldecomp} and \eqref{eq:gofe}, one can alternatively rewrite \eqref{eq:gce} and solve for $\mathbf{\hat{w}}$ in
\begin{equation*}
	d_{t}^{\mathbf{\hat{V}}} \hat{f} + \nabla_{\mathcal{M}} \cdot (\hat{f} \mathbf{\hat{w}}) - \hat{f}KV - \hat{f} \nabla_{\mathcal{M}} \cdot \mathbf{\hat{v}} = 0.
\end{equation*}
However, for the reasons elaborated in Sec.~\ref{sec:model}, we consider solving \eqref{eq:pce} in a variational formulation.

Again, let $t \in I$ be fixed.
In view of the ill-posedness of \eqref{eq:pce}, we seek a minimiser to the functional $\mathcal{F}: H^{1}(\mathcal{M}_{t}, T\mathcal{M}_{t}) \to [0, +\infty]$,
\begin{equation}
	\mathcal{F}(\mathbf{\hat{u}}) \coloneqq \norm{d_{t}^{\mathbf{\hat{N}}} \hat{f} + \nabla_{\mathcal{M}} \cdot (\hat{f} \mathbf{\hat{u}}) - \hat{f} K V}_{L^{2}(\mathcal{M}_{t})}^{2} + \mathcal{R}(\mathbf{\hat{u}}) + \mathcal{S}(\mathbf{\hat{u}}),
\label{eq:cmfunctional}
\end{equation}
where $\mathcal{R}(\mathbf{\hat{u}})$ is defined as in \eqref{eq:regfunctional} and
\begin{equation}
	\mathcal{S}(\mathbf{\hat{u}}) \coloneqq \alpha_{2} \int_{\mathcal{M}_{t}} (1 - s) \bigl( \nabla_{\mathcal{M}} \cdot \mathbf{\hat{u}} \bigr)^{2} \; d\mathcal{M}_{t}.
\label{eq:regfunctional2}
\end{equation}
Here, $\alpha_{2} > 0$ is an additional regularisation parameter.
The reason for this additional regularisation term in contrast to \eqref{eq:offunctional} is to control oscillations in the velocity field, which may arise from the data term in the presence of noise.
For the concrete choice of $s$ we again refer to Sec.~\ref{sec:experiments}.
\section{Numerical Solution} \label{sec:numerics}

In the following we consider $t \in I$ arbitrary but fixed.
Let us be given a set $\{x_{j} \in \mathcal{S}^{2}\}_{j=1, \dots, N}$ of pairwise distinct points on the 2-sphere.
With each of its elements $x_{j}$ we associate the $x_{j}$-zonal function $\tilde{b}_{j}$, see \eqref{eq:zonalfunc}.
According to definition \eqref{eq:vecbasisfun}, we immediately obtain the set
\begin{equation}
	\left\{ \mathbf{\tilde{y}}_{j}^{(i)}: j = 1, \dots, N, i = 1, 2 \right\}
\label{eq:vecbasisfunset}
\end{equation}
of tangent vector fields on $\mathcal{S}^{2}$.

We approximate the solutions to the problems
\begin{equation*}
	\min_{\mathbf{\hat{w}} \in H^{1}(\mathcal{M}_{t}, T\mathcal{M}_{t})} \mathcal{E}(\mathbf{\hat{w}}) \quad \text{and} \quad \min_{\mathbf{\hat{u}} \in H^{1}(\mathcal{M}_{t}, T\mathcal{M}_{t})} \mathcal{F}(\mathbf{\hat{u}})
\end{equation*}
in a finite-dimensional subspace $\mathcal{U}$, where $\mathcal{E}$ and $\mathcal{F}$ are defined as in \eqref{eq:offunctional} and \eqref{eq:cmfunctional}, respectively.
We define this space of tangent vector fields on $\mathcal{M}_{t}$ as
\begin{equation}
	\mathcal{U} \coloneqq \mathrm{span} \left\{ \mathbf{\hat{y}}_{j}^{(i)}: j = 1, \dots, N, i = 1, 2 \right\}.
\label{eq:vecbasisfunspace}
\end{equation}
Here, $\mathbf{\hat{y}}_{j}^{(i)} = D\tilde{\phi}(t, \cdot) \mathbf{\tilde{y}}_{j}^{(i)}$ is the pushforward of an element $\mathbf{\tilde{y}}_{j}^{(i)}$ contained in the set \eqref{eq:vecbasisfunset} by means of the differential $D\tilde{\phi}$, see \eqref{eq:dphi} for its definition.
For notational convenience we relabel the elements of $\mathcal{U}$ with the help of an index set $J_{\mathcal{U}} \subset \N$ and use a single index letter $p \in J_{\mathcal{U}}$.

\subsection{Conservation of Brightness}

We expand the sought tangent vector field as
\begin{equation}
	\mathbf{\hat{w}} = \sum_{p \in J_{\mathcal{U}}} w_{p} \mathbf{\hat{y}}_{p},
\label{eq:ofansatz}
\end{equation}
where $w_{p} \in \R$, $p \in J_{\mathcal{U}}$, are the unknown coefficients.
By plugging ansatz \eqref{eq:ofansatz} into functional \eqref{eq:offunctional}, we obtain for the data term
\begin{equation*}
	\int_{\mathcal{M}_{t}} \Bigl( d_{t}^{\mathbf{\hat{V}}} \hat{f} + \sum_{p \in J_{\mathcal{U}}} w_{p} \bigl( \nabla_{\mathcal{M}} \hat{f} \cdot \mathbf{\hat{y}}_{p} \bigr) \Bigr)^{2} \; d\mathcal{M}_{t}.
\end{equation*}

Concerning the regularisation functional $\mathcal{R}(\mathbf{\hat{w}})$, as defined in \eqref{eq:regfunctional}, we first observe that the coefficients $D_{i}w^{k}$, defined in \eqref{eq:covderivcoeff}, are linear.
For $\mathbf{\hat{y}}_{p} = y_{p}^{k} \partial_{k} \y$ we have, by definition \eqref{eq:covderivcoeff},
\begin{align*}
	D_{i}w^{k} & = \partial_{i} \left( \sum_{p \in J_{\mathcal{U}}} w_{p} y_{p}^{k}  \right) + \left( \sum_{p \in J_{\mathcal{U}}} w_{p} y_{p}^{m} \right) \Gamma_{im}^{k} \\
	& = \sum_{p \in J_{\mathcal{U}}} w_{p} \left( \partial_{i} y_{p}^{k} + y_{p}^{m} \Gamma_{im}^{k} \right) \\
	& = \sum_{p \in J_{\mathcal{U}}} w_{p} D_{i}y_{p}^{k}.
\end{align*}
With the help of Lemma~\ref{lem:covderiv} we then find that
\begin{equation*}
	\norm{\nabla \sum_{p \in J_{\mathcal{U}}} w_{p} \mathbf{\hat{y}}_{p}}_{2}^{2} = \sum_{p, q \in J_{\mathcal{U}}} w_{p} w_{q} g_{k\ell} g^{ij} D_{i}y_{p}^{k} D_{j}y_{q}^{\ell}
\end{equation*}
and, moreover, for the second term in \eqref{eq:regfunctional} we obtain
\begin{equation*}
	\norm{\sum_{p \in J_{\mathcal{U}}} w_{p} \mathbf{\hat{y}}_{p}}^{2} = \sum_{p, q \in J_{\mathcal{U}}} w_{p} w_{q} \bigl( \mathbf{\hat{y}}_{p} \cdot \mathbf{\hat{y}}_{q} \bigr).
\end{equation*}

The optimality conditions for $\mathcal{E}(\mathbf{\hat{w}})$ are obtained by  taking $\partial \mathcal{E}/\partial w_{p} = 0$ for all $p \in J_{\mathcal{U}}$ and in matrix-vector form read
\begin{equation}
	(A + \alpha_{0} C + \alpha_{1} D) w = b,
\label{eq:linsystemof}
\end{equation}
where $w = (w_{1}, \dots, w_{\abs{J_{\mathcal{U}}}})^{\top} \in \R^{\abs{J_{\mathcal{U}}}}$ denotes the vector of unknowns.
The entries of the matrix $A = (a_{pq})$ corresponding to the data term are given by
\begin{equation*}
	a_{pq} = \int_{\mathcal{M}_{t}} \bigl( \nabla_{\mathcal{M}} \hat{f} \cdot \mathbf{\hat{y}}_{p} \bigr) \bigl( \nabla_{\mathcal{M}} \hat{f} \cdot \mathbf{\hat{y}}_{q} \bigr) \; d\mathcal{M}_{t},
\end{equation*}
whereas the entries of the matrices $C = (c_{pq})$ and $D = (d_{pq})$ corresponding to the regularisation terms are given by
\begin{equation*}
	c_{pq} = \int_{\mathcal{M}_{t}} s g_{k\ell} g^{ij} D_{i}y_{p}^{k} D_{j}y_{q}^{\ell} \; d\mathcal{M}_{t}
\end{equation*}
and
\begin{equation*}
	d_{pq} = \int_{\mathcal{M}_{t}} (1 - s) \bigl( \mathbf{\hat{y}}_{p} \cdot \mathbf{\hat{y}}_{q} \bigr) \; d\mathcal{M}_{t},
\end{equation*}
respectively.
The entries of the vector $b = (b_{p})$ are
\begin{equation*}
	b_{p} = - \int_{\mathcal{M}_{t}} d_{t}^{\mathbf{\hat{V}}} \hat{f} \bigl( \nabla_{\mathcal{M}} \hat{f} \cdot \mathbf{\hat{y}}_{p} \bigr) \; d\mathcal{M}_{t}.
\end{equation*}

\subsection{Conservation of Mass}

Next, let us derive the optimality conditions for the functional $\mathcal{F}$, defined in \eqref{eq:cmfunctional}.
For numerical convenience we use \eqref{eq:pce} rather than \eqref{eq:gce} as they are equivalent.
Accordingly, we expand the sought tangent vector field as
\begin{equation*}
	\mathbf{\hat{u}} = \sum_{p \in J_{\mathcal{U}}} u_{p} \mathbf{\hat{y}}_{p},
\end{equation*}
where $u_{p} \in \R$, $p \in J_{\mathcal{U}}$, are the unknown coefficients.
For the data term we get
\begin{equation*}
	\int_{\mathcal{M}_{t}} \Bigl( d_{t}^{\mathbf{\hat{V}}} \hat{f} + \sum_{p \in J_{\mathcal{U}}} u_{p} \bigl( \nabla_{\mathcal{M}} \hat{f} \cdot \mathbf{\hat{y}}_{p} + \hat{f} \nabla_{\mathcal{M}} \cdot \mathbf{\hat{y}}_{p} \bigr) - \hat{f} K V - \nabla_{\mathcal{M}} \hat{f} \cdot \mathbf{\hat{v}} \Bigr)^{2} \; d\mathcal{M}_{t}.
\end{equation*}

Regarding the term $\mathcal{S}(\mathbf{\hat{u}})$ in the functional \eqref{eq:cmfunctional} we find that
\begin{equation*}
	\biggl( \nabla_{\mathcal{M}} \cdot \sum_{p \in J_{\mathcal{U}}} u_{p} \mathbf{\hat{y}}_{p} \biggr)^{2} = \sum_{p, q \in J_{\mathcal{U}}} u_{p} u_{q} \bigl( \nabla_{\mathcal{M}} \cdot \mathbf{\hat{y}}_{p} \bigr) \bigl( \nabla_{\mathcal{M}} \cdot \mathbf{\hat{y}}_{q} \bigr).
\end{equation*}
Analogously to before, by taking $\partial \mathcal{F}/\partial u_{p} = 0$ for all $p \in J_{\mathcal{U}}$ we obtain the optimality conditions in matrix-vector form
\begin{equation}
	(A + \alpha_{0} C + \alpha_{1} D + \alpha_{2} E) u = b,
\label{eq:linsystemcm}
\end{equation}
where the matrices $C$ and $E$ are as before.
The entries of the matrix $A = (a_{pq})$ corresponding to the data term are
\begin{equation*}
	a_{pq} = \int_{\mathcal{M}_{t}} \bigl( \nabla_{\mathcal{M}} \hat{f} \cdot \mathbf{\hat{y}}_{p} + \hat{f} \nabla_{\mathcal{M}} \cdot \mathbf{\hat{y}}_{p} \bigr) \bigl( \nabla_{\mathcal{M}} \hat{f} \cdot \mathbf{\hat{y}}_{q} + \hat{f} \nabla_{\mathcal{M}} \cdot \mathbf{\hat{y}}_{q} \bigr) \; d\mathcal{M}_{t}.
\end{equation*}
The entries of the matrix $E = (e_{pq})$ correspond to the regularisation term \eqref{eq:regfunctional2} and are given by
\begin{equation*}
	e_{pq} = \int_{\mathcal{M}_{t}} (1 - s) \bigl( \nabla_{\mathcal{M}} \cdot \mathbf{\hat{y}}_{p} \bigr) \bigl( \nabla_{\mathcal{M}} \cdot \mathbf{\hat{y}}_{q} \bigr) \; d\mathcal{M}_{t}.
\end{equation*}
Finally, the vector $b = (b_{p})$ now reads
\begin{equation*}
	b_{p} = - \int_{\mathcal{M}_{t}} \bigl( d_{t}^{\mathbf{\hat{V}}} \hat{f} - \hat{f} K V - \nabla_{\mathcal{M}} \hat{f} \cdot \mathbf{\hat{v}} \bigr) \bigl( \nabla_{\mathcal{M}} \hat{f} \cdot \mathbf{\hat{y}}_{p} \bigr) \; d\mathcal{M}_{t}.
\end{equation*}

\subsection{Surface Parametrisation} \label{sec:surfparam}

The main goal of this subsection is to estimate the time-evolving surface $\mathcal{M}$ together with a parametrisation $\y$ of the form \eqref{eq:param}.
We extend the idea of surface interpolation from scattered data in \cite{LanSch17} and seek a function $\tilde{\rho}: I \times \mathcal{S}^{2} \to (0, \infty)$ which is sufficiently regular in time and in space.

Given noisy data $\tilde{\rho}^{\delta}: I \times \mathcal{S}^{2} \to (0, \infty)$, we seek a minimiser to the energy
\begin{equation}
	\mathcal{G}(\tilde{\rho}) \coloneqq \int_{I} \Bigl( \norm{\tilde{\rho}(t, \cdot) - \tilde{\rho}^{\delta}(t, \cdot)}_{L^{2}(\mathcal{S}^{2})}^{2} + \beta_{0} \abs{\tilde{\rho}(t, \cdot)}_{H^{r}(\mathcal{S}^{2})}^{2} + \beta_{1} \norm{\partial_{t} \tilde{\rho}(t, \cdot)}_{L^{2}(\mathcal{S}^{2})}^{2} \Bigr) \; dt,
\label{eq:surffunctional}
\end{equation}
such that $\tilde{\rho} \in L^{2}(I; H^{r}(\mathcal{S}^{2}))$ and $\partial_{t} \tilde{\rho} \in L^{2}(I; L^{2}(\mathcal{S}^{2}))$.
We assume that $\tilde{\rho}^{\delta}(t, \cdot)$ is bounded for each $t \in I$.
Here, $\beta_{0}, \beta_{1} > 0$ are regularisation parameters balancing the terms, $r > 0$ is a sufficiently large real number, cf. \eqref{eq:sobolevseminormsphere}, and $L^{2}(I; \cdot)$ are Bochner spaces, see \cite[Chap.~5.9.2]{Eva10}.
We refer to the discussion in \cite[Sec.~4.3]{LanSch17} regarding the regularity requirements of $\tilde{\rho}$.

While the above problem is stated in an infinite-dimensional setting, only finitely many (point) evaluations are available in practice.
For each frame $t \in \{ 0, \dots, T \}$, we are given $N_{t} \ge 0$ noisy measurements $\left\{ \tilde{\rho}^{\delta}(t, x_{i}): x_{i} \in \mathcal{S}^{2} \right\}_{i=1}^{N_{t}}$ at pairwise distinct points on $\mathcal{S}^{2}$.
Approximate locations of cell centres serve as measurements, cf. Sec.~\ref{sec:preprocessing}.
Due to the form \eqref{eq:param}, the values of the point evaluations are given by
\begin{equation}
	\tilde{\rho}^{\delta}(t, \bar{x}_{i}) = \norm{x_{i}}, \; x_{i} \in \R^{3} \setminus \{ 0 \}, \; t \in \{ 0, \dots, T \}, \; i \in \{ 1, \dots, N_{t} \},
\label{eq:surfmeasurements}
\end{equation}
where $\bar{x}_{i} = x_{i} / \norm{x_{i}}$, akin to \eqref{eq:extension}, is the radial projection onto the 2-sphere.
See also Fig.~\ref{fig:surfaces} for illustration.
In total, at least one sample point is required.

We attempt to approximate the solution to $\min_{\tilde{\rho}} \mathcal{G}(\tilde{\rho})$ in a finite-dimensional subspace $\mathcal{Q} \subset H^{r}(\mathcal{S}^{2})$.
We choose this space as
\begin{equation*}
	\mathcal{Q} \coloneqq \mathrm{span} \left\{ \tilde{Y}_{p}: p \in J_{\mathcal{Q}} \right\},
\end{equation*}
where $J_{\mathcal{Q}} \subset \N$ again is an index set and $\tilde{Y}_{p}$ are scalar spherical harmonics, see Sec.~\ref{sec:sphericalharmonics}.
For a time instant $t \in \{ 0, \dots, T \}$, the sought function is thus expanded as
\begin{equation}
	\tilde{\rho}(t, \cdot) = \sum_{p \in J_{\mathcal{Q}}} \varrho_{p}(t) \tilde{Y}_{p},
\label{eq:rhoansatz}
\end{equation}
where $\varrho_{p}(t) \in \R$, for $p \in J_{\mathcal{Q}}$, are the time-dependent, unknown coefficients.

With ansatz \eqref{eq:rhoansatz} we find that
\begin{equation*}
	\partial_{t} \tilde{\rho}(t, \cdot) = \partial_{t} \sum_{p \in J_{\mathcal{Q}}} \varrho_{p}(t) \tilde{Y}_{p} = \sum_{p \in J_{\mathcal{Q}}} \partial_{t} \varrho_{p}(t) \tilde{Y}_{p}.
\end{equation*}
Thus, for the last term in \eqref{eq:surffunctional} we have
\begin{equation*}
\begin{aligned}
	\norm{\partial_{t} \tilde{\rho}(t, \cdot)}_{L^{2}(\mathcal{S}^{2})}^{2} & = \norm{\sum_{p \in J_{\mathcal{Q}}} \partial_{t} \varrho_{p}(t) \tilde{Y}_{p}}_{L^{2}(\mathcal{S}^{2})}^{2} \\
	& = \int_{\mathcal{S}^{2}} \Bigl( \sum_{p \in J_{\mathcal{Q}}} \partial_{t} \varrho_{p}(t) \tilde{Y}_{p} \Bigr)^{2} \; d\mathcal{S}^{2} \\
	& = \int_{\mathcal{S}^{2}} \sum_{p \in J_{\mathcal{Q}}} \sum_{q \in J_{\mathcal{Q}}} \partial_{t} \varrho_{p}(t) \partial_{t} \varrho_{q}(t) \tilde{Y}_{p} \tilde{Y}_{q} \; d\mathcal{S}^{2} \\
	& = \sum_{p \in J_{\mathcal{Q}}} \sum_{q \in J_{\mathcal{Q}}} \partial_{t} \varrho_{p}(t) \partial_{t} \varrho_{q}(t) \int_{\mathcal{S}^{2}} \tilde{Y}_{p} \tilde{Y}_{q} \; d\mathcal{S}^{2} \\
	& \overset{\mathclap{\eqref{eq:spharmorthonormal}}}{=} \sum_{p \in J_{\mathcal{Q}}} \bigl( \partial_{t} \varrho_{p}(t) \bigr)^{2}.
\end{aligned}
\end{equation*}
In further consequence, we replace the partial derivatives with respect to time with the backward difference $\partial_{t} \varrho_{p}(t) \coloneqq \varrho_{p}(t) - \varrho_{p}(t - 1)$.

By plugging \eqref{eq:rhoansatz} into \eqref{eq:surffunctional}, utilising definition \eqref{eq:sobolevseminormsphere}, and taking $\partial \mathcal{G} / \partial \varrho_{p}(t)$ for all $p \in J_{\mathcal{Q}}$ and all $t \in \{ 0, \dots, T \}$, we obtain the linear system of optimality conditions
\begin{equation}
\begin{aligned}
	\sum_{q \in J_{\mathcal{Q}}} \varrho_{q}(t) \left( \sum_{i=1}^{N_{t}} \tilde{Y}_{p}(\bar{x}_{i}) \tilde{Y}_{q}(\bar{x}_{i}) \right) & + (\beta_{0} \lambda_{p}^{r} + 2 \beta_{1}) \varrho_{p}(t) \\
	- \beta_{1} \varrho_{p}(t-1) & - \beta_{1} \varrho_{p}(t + 1) = \sum_{i=1}^{N_{t}} \norm{x_{i}} \tilde{Y}_{p}(\bar{x}_{i}), \quad p \in J_{\mathcal{Q}},
\end{aligned}
\label{eq:surfoptcond}
\end{equation}
and enforce (temporal) zero Neumann boundary conditions at $t = 0$ and $t = T$.

\subsection{Evaluation of Integrals} \label{sec:eval}

In order to solve the linear systems \eqref{eq:linsystemof} and \eqref{eq:linsystemcm}, it remains to discuss the numerical evaluation of the involved integrals and the construction of the set \eqref{eq:vecbasisfunset} of basis functions.
For each time instant $t \in I$ we treat this problem in a unified manner on the 2-sphere by utilising identity \eqref{eq:surfintegral} together with a suitable cubature rule.
Given $M$ evaluation points $x_{i} \in \mathcal{S}^{2}$ and corresponding weights $q_{i} \in \R$, we approximate the surface integral of a function $\hat{f}: \mathcal{M}_{t} \to \R$ by
\begin{equation*}
	\int_{\mathcal{M}_{t}} \hat{f} \; d\mathcal{M}_{t} = \int_{\mathcal{S}^{2}} \tilde{f} \tilde{\rho} \sqrt{\norm{\nabla_{\mathcal{S}^{2}} \tilde{\rho}}^{2} + \tilde{\rho}^{2}} \; d\mathcal{S}^{2} \approx \sum_{i=1}^{M} \bigl( \tilde{f} \tilde{\rho} \sqrt{\norm{\nabla_{\mathcal{S}^{2}} \tilde{\rho}}^{2} + \tilde{\rho}^{2}} \bigr)(x_{i}) q_{i},
\end{equation*}
where $\tilde{f}: \mathcal{S}^{2} \to \R$ is as defined in \eqref{eq:coordrepr}.

Since the data motivating this article are supported only on the upper hemisphere, we assume that the coefficients of vectorial basis functions centred at $x_{j}^{3} < 0$ are zero, cf. \eqref{eq:zonalfunc}.
As a result, the number of unknowns in the linear systems \eqref{eq:linsystemof} and \eqref{eq:linsystemcm} is halved.
Moreover, we choose a cubature rule for integration over the spherical cap
\begin{equation*}
	\mathcal{C} \coloneqq \left\{ x \in \mathcal{S}^{2}: \arccos(x \cdot \mathbf{e}_{3}) \le \pi/2 \right\} \subset \mathcal{S}^{2},
\end{equation*}
where $\mathbf{e}_{3} = (0, 0, 1)^{\top} \in \R^{3}$ is the unit vector pointing in $x^{3}$-direction.
We refer to \cite[Sec.~7.1]{HesSloWom10} for more details and the construction of this cubature rule.

To achieve an approximately uniform placement of basis functions \eqref{eq:vecbasisfunset} on the upper hemisphere, we generate a polyhedral approximation $\mathcal{S}_{h}^{2} = (\mathcal{V}, \mathcal{T})$ of $\mathcal{S}^{2}$.
Here, $\mathcal{V} = \{ v_{1}, \dots, v_{n} \} \subset \mathcal{S}^{2}$ is the set of vertices and $\mathcal{T}$ the set of triangular faces.
This triangular mesh is generated by iterative refinement of an icosahedron which is inscribed in the sphere, see e.g. \cite[Chapter~1.3.3]{BotKobPauAllLev10}.
In every refinement step the edge lengths are halved by connecting the edge midpoints and projecting them onto the unit sphere.
The number of vertices of $\mathcal{S}_{h}^{2}$ in iteration $\ell \in \N_{0}$ is $n = 2 + 10 \cdot 4^{\ell}$.
For the placement of basis functions \eqref{eq:vecbasisfunset} we choose $\mathcal{V} \cap \mathcal{C}$ as centre points, resulting in approximately $n$ basis functions, as every point in this set gives rise to two basis functions, cf. \eqref{eq:vecbasisfun}.
\section{Experiments} \label{sec:experiments}

\subsection{Microscopy Data}

The data at hand are volumetric time-lapse (4-dimensional) images of a living zebrafish embryo.
They were recorded with the help of confocal laser-scanning microscopy during the gastula period of the animal, taking place approximately five to ten hours after its fertilisation.
The sequence features endodermal cells which have been labelled with a green fluorescence protein and can therefore be observed separately from the background.
We refer to \cite{MegFra03} for the imaging techniques and the data acquisition, and to \cite{NaiSchi08} for information about the treatment of the specimen.

The recorded microscopy data contains a cuboid region of approximately $860 \times 860 \times 320 \, \mu \mathrm{m}^{3}$ at a spatial resolution of $512 \times 512 \times 44$ voxels.
It features the animal hemisphere and exhibits noise contamination.
Image intensities are in the range $\{0, \dots, 255\}$.
A representative sequence contains 151 frames recorded at a temporal interval of $120 \, \mathrm{s}$.
For further consideration we denote the recorded data by $f^{\delta} \in \{0, \dots, 255\}^{151 \times 512 \times 512 \times 44}$.
See Fig.~\ref{fig:raw} for the unprocessed and noisy microscopy data.

\subsection{Preprocessing and Surface Data Acquisition} \label{sec:preprocessing}

In this section, we briefly outline how we extract an image sequence $\hat{f}$ together with the time-evolving sphere-like surface $\mathcal{M}$ from said microscopy data.
As outlined in Sec.~\ref{sec:surfparam}, we use the approximate centres of cell nuclei as sample points to find the surface.
They represent local maxima in image intensity and can be found with sufficient accuracy by Gaussian filtering each frame $f^{\delta}(t, \cdot)$ followed by thresholding.
However, before solving the surface interpolation problem \eqref{eq:surffunctional}, the points are centred around the origin by fitting one single sphere to the union of all thresholded local maxima and subsequently subtracting the spherical centre.
Then, measurements \eqref{eq:surfmeasurements} are computed and the system \eqref{eq:surfoptcond} of optimality conditions is solved.
After having found a finite-dimensional approximation \eqref{eq:rhoansatz} of $\tilde{\rho}$, all surface quantities derived in Sec.~\ref{sec:surfaces} can be computed.

It remains to discuss the numerical approximation of the image sequence $\hat{f}$, its partial derivative $\partial_{t} \hat{f}$, and the surface gradient $\nabla_{\mathcal{M}} \hat{f}$.
For each frame $t \in \{0, \dots, T\}$, we obtain surface data $\hat{f}(t, x)$ at $x \in \mathcal{M}_{t}$ via the radial projection
\begin{equation}
	\hat{f}(t, x) \coloneqq \max_{c \in [1-\varepsilon, 1+\varepsilon]} \mathring{f}^{\delta}(t, cx),
\label{eq:radialprojection}
\end{equation}
where $\varepsilon > 0$ is chosen sufficiently large.
By $\mathring{f}^{\delta}$ we denote the piecewise linear extension of $f^{\delta}$ to $\R^{3}$, which is required for gridded data.
Before doing so, the intensities $\mathring{f}^{\delta}$ are scaled to the interval $[0, 1]$.
The above projection \eqref{eq:radialprojection} selects the maximum fluorescence-intensity within a narrow band around $\mathcal{M}_{t}$ and thereby allows for small deviations of the cell nuclei from the fitted surface.
See Figs.~\ref{fig:data3} and \ref{fig:cross} for illustration.

Furthermore, we approximate the surface gradient of $\hat{f}$ as
\begin{equation}
	\nabla_{\mathcal{M}}\hat{f}(t, x) \coloneqq \mathrm{P}_{\mathcal{M}}(t, x) \left[ \mean\limits_{c \in [1-\varepsilon, 1+\varepsilon]} \nabla_{\R^{3}} \mathring{f}^{\delta}(t, cx) \right],
\label{eq:surfgradapprox}
\end{equation}
where $\mathrm{P}_{\mathcal{M}}$ is the orthogonal projector defined in \eqref{eq:orthogonalprojector}.
Here, $\nabla_{\R^{3}} \mathring{f}^{\delta}$ is approximated by central differences inside the cuboid and by one-sided differences at the boundaries.
We stress that in the above projection the parameter $\varepsilon$ must be chosen with care as zero values influence the magnitude of the projection \eqref{eq:surfgradapprox} and, in further consequence, the estimated velocity fields.

Finally, for $t = \{ 0, \dots, T-1 \}$, the partial derivative of $\hat{f}$ with respect to time is estimated by the forward difference $\partial_{t} \hat{f}(t, \cdot) \coloneqq \hat{f}(t + 1, \cdot) - \hat{f}(t, \cdot)$.

\subsection{Visualisation of Results} \label{sec:experiments:visualisation}

We utilise the standard flow colour-coding for the visualisation of vector fields \cite{BakSchaLewRotBla11}.
The idea is to create a colour image representation of a (planar) vector field by assigning each vector a colour and an intensity value from a pre-defined colour disk.
The colour and the intensity associated with a vector are determined by its angle, respectively its length.
Typically, the radius $R$ of the colour disk is chosen to be equal to the length of the longest vector in the vector field one attempts to visualise.

In \cite{KirLanSch15, LanSch17}, the idea has been extended to illustrate vector fields on surfaces.
However, before assigning a---not necessarily tangent---vector a colour and an intensity, it is projected to the plane and then scaled to its original length, provided that the length of the projection is non-zero.
Let us denote by $\mathrm{P}_{x^{3}}: (x^{1}, x^{2}, x^{3})^{\top} \to (x^{1}, x^{2}, 0)^{\top}$ the orthogonal projector of $\R^{3}$ onto the $x^{1}$-$x^{2}$-plane.
Then, for a general surface vector field $\mathbf{\hat{X}}(t, \cdot): \mathcal{M}_{t} \to \R^{3}$, we apply the colour-coding to the scaled projection
\begin{equation*}
	\mathbf{\hat{X}} \mapsto
	\begin{cases}
		\frac{\norm{\mathbf{\hat{X}}}}{\norm{\mathrm{P}_{x^{3}} \mathbf{\hat{X}}}} \mathrm{P}_{x^{3}} \mathbf{\hat{X}} & \text{if } \norm{\mathrm{P}_{x^{3}} \mathbf{\hat{X}}} > 0, \\
		0 & \text{else},
	\end{cases}
\end{equation*}
and map the resulting colour image back onto the surface.
As a result, the length of the individual vectors is preserved, provided that they do not vanish in the projection.
We assume that each $\mathcal{M}_{t}$ is such that $\mathrm{P}_{x^{3}}$ is injective.
Moreover, due to the assumptions made in Sec.~\ref{sec:eval}, we only consider visualising the northern hemisphere.

In order to evaluate the computed velocity fields, we create another triangular mesh $\mathcal{S}_{h'}^{2}$ similar to the one in Sec.~\ref{sec:eval}.
Vector fields are then evaluated at the centroids of the triangular faces and thus yield piecewise constant colour-coded images.
For plotting purposes, the surface data is evaluated at the vertices of $\mathcal{S}_{h'}^{2}$ and interpolated piecewise linearly.
Moreover, to simplify matters we plot piecewise linear approximation of the surfaces.
We found that $\ell = 7$ iterative refinement steps sufficiently resolve the microscopy data.

In addition, we illustrate surface velocity fields with streamlines, see e.g. \cite{WeiErl05}.
However, before doing so the velocity fields are projected onto the $x^{1}$-$x^{2}$-plane.
Then, given a steady vector field $\mathbf{v}$ in the plane and a starting point $x_{0}$, a streamline $\gamma(\cdot, x_{0})$ solves the ordinary differential equation
\begin{equation}
\begin{aligned}
	\partial_{\tau} \gamma(\tau, x_{0}) & = \mathbf{v}(\gamma(\tau, x_{0})), \\
	\gamma(0, x_{0}) & = x_{0}.
\end{aligned}
\label{eq:streamline}
\end{equation}
We compute numerical approximations $\gamma_{\kappa}$ of \eqref{eq:streamline} by solving
\begin{equation*}
\begin{aligned}
	\gamma_{\kappa}(\tau + 1, x_{0}) & = \gamma_{\kappa}(\tau, x_{0}) + \kappa \mathbf{v}(\gamma_{\kappa}(\tau, x_{0})), \\
	\gamma_{\kappa}(0, x_{0}) & = x_{0},
\end{aligned}
\end{equation*}
for a given number of initial points $x_{0} \in \R^{2}$ and for $\tau = 50$ iterations.
Here, $\kappa > 0$ is a step size parameter that is set in dependence of $\mathbf{v}$.
Moreover, we apply linear interpolation of $\mathbf{v}$.
With increasing $\tau$ we adjust the colour of $\gamma_{\kappa}$ from yellow to green, see Figs.~\ref{fig:flow2:detail} and \ref{fig:streamlines}.
In what follows we create for a given surface vector field $\mathbf{\hat{X}}$ a streamline visualisation of its projection $\mathrm{P}_{x^{3}} \mathbf{\hat{X}}$ onto the $x^{1}$-$x^{2}$-plane.
Note that, other than in the colour-coding above, we do not rescale the projected vectors.

\subsection{Results}

We conducted several experiments with said zebrafish microscopy data.
In a first step, we minimised functional \eqref{eq:surffunctional} by solving the optimality conditions \eqref{eq:surfoptcond} to obtain an approximation of the deforming surface.
Approximate cell centres were used as sample points of the surface, see the discussion in Sec.~\ref{sec:preprocessing}.
We chose the parameter $r$ of the Sobolev space $H^{r}(\mathcal{S}^{2})$ as $r = 3 + \epsilon$, where $\epsilon = 2.2204 \cdot 10^{-16}$ is the machine precision.
This particular choice originates from regularity requirements discussed in \cite[Sec.~4.3]{LanSch17}.
The regularisation parameters were set to $\beta_{0} = 10^{-4}$ and $\beta_{1} = 100$, and the finite-dimensional subspace $\mathcal{Q} \subset H^{r}(\mathcal{S}^{2})$ was chosen as
\begin{equation*}
	\mathcal{Q} = \mathrm{span} \left\{ \tilde{Y}_{n, j}: n = 0, \dots, 10, j = 1, \dots, 2n + 1 \right\}.
\end{equation*}

\begin{figure}[t]
	\includegraphics[width=0.32\textwidth]{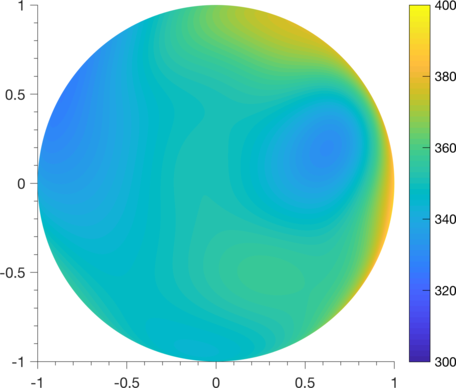} \hfill
	\includegraphics[width=0.32\textwidth]{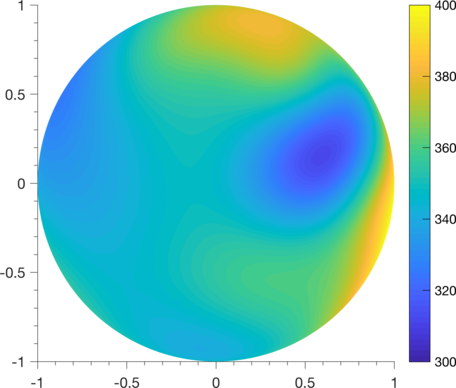} \hfill
	\includegraphics[width=0.32\textwidth]{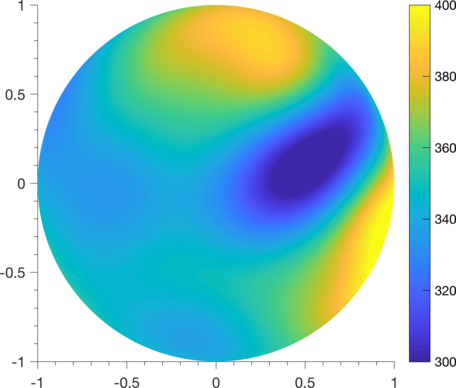}
	\caption{Depicted is a top view of the recovered (radius) function $\tilde{\rho}$ at times 110, 130, and 150 (from left to right). All dimensions are in micrometer ($\mu$m).}
	\label{fig:rho2}
\end{figure}

\begin{figure}[t]
	\includegraphics[width=0.32\textwidth]{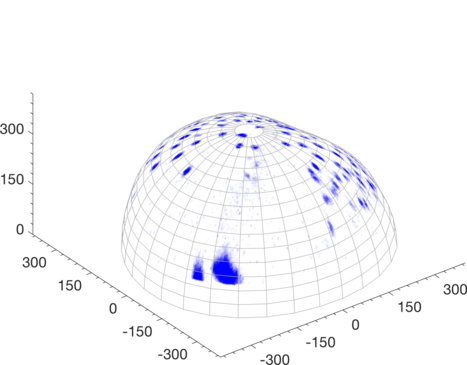} \hfill
	\includegraphics[width=0.32\textwidth]{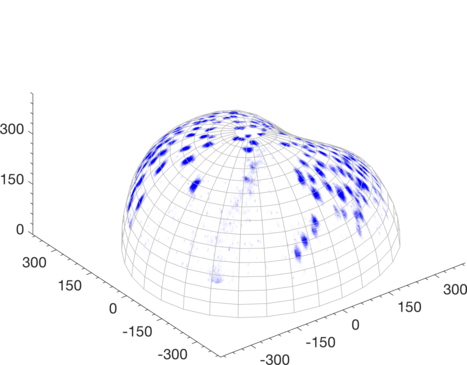} \hfill
	\includegraphics[width=0.32\textwidth]{\resfolder/data3-cxcr4aMO2_290112-grid-150}
	\caption{Shown are frames 110, 130, and 150 (from left to right) of the upper hemisphere of the estimated sphere-like surface $\mathcal{M}_{t}$ together with surface data $\hat{f}$. The curved surface is indicated by an artificial mesh which---for illustration purposes---has been widened in radial direction by one percent of its original distance from the origin. All dimensions are in micrometer ($\mu$m).}
	\label{fig:data3}
\end{figure}

Figure~\ref{fig:rho2} depicts three selected frames of a minimising function $\tilde{\rho}$ of $\mathcal{G}$ computed for frames $\{ 100, 101, \dots, 151 \}$ of the microscopy sequence.
Figure~\ref{fig:data3} shows the estimated surface $\mathcal{M}_{t}$ for the same frames together with the surface data $\hat{f}$, which is obtained by the radial maximum-intensity projection \eqref{eq:radialprojection} with $\varepsilon = 0.1$.
We highlight that the deformation of the embryo is well-captured and contains the anticipated cell features, cf. also the unprocessed volumetric data in Fig.~\ref{fig:raw}.
The growing dent in the surface corresponds to the clearly visible dark blue area in Fig.~\ref{fig:rho2}.

\begin{figure}[t]
	\includegraphics[width=0.32\textwidth]{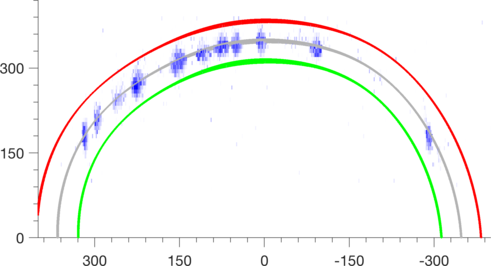} \hfill
	\includegraphics[width=0.32\textwidth]{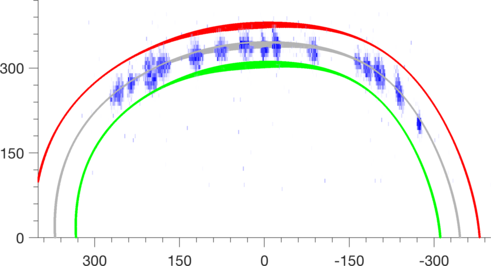} \hfill
	\includegraphics[width=0.32\textwidth]{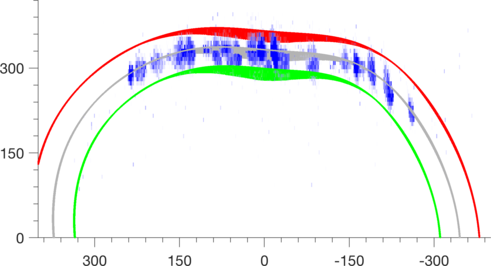}
	\caption{Shown are cross sections of 50~$\mu$m thickness of the unprocessed microscopy data $f^{\delta}$, the interpolated surface $\mathcal{M}_{t}$ (in grey), and the narrow band (red and green) within projection \eqref{eq:radialprojection} is taken to obtain $\hat{f}$. The images correspond to the ones depicted in Fig.~\ref{fig:data3}, i.e. frames 110, 130, and 150 (from left to right). All dimensions are in micrometer ($\mu$m).}
	\label{fig:cross}
\end{figure}

Moreover, Fig.~\ref{fig:cross} illustrates a section of the unprocessed microscopy data $f^{\delta}$ together with the fitted surface (in grey) and the narrow band (in red and green) used in \eqref{eq:radialprojection} and \eqref{eq:surfgradapprox} to obtain the surface data $\hat{f}$ and the surface gradient $\nabla_{\mathcal{M}} \hat{f}$ from the volumetric data $f^{\delta}$, respectively.
Note that the fitted surface accurately represents the single-cell layer and cell material is located almost entirely within the narrow band.

In a second step, we computed minimisers of the functionals \eqref{eq:offunctional} and \eqref{eq:cmfunctional} for one pair of frames by solving the corresponding optimality conditions \eqref{eq:linsystemof} and \eqref{eq:linsystemcm}, respectively.
As outlined in Sec.~\ref{sec:eval}, the finite-dimensional subspace $\mathcal{U} \subset H^{1}(\mathcal{M}_{t}, T\mathcal{M}_{t})$ in \eqref{eq:vecbasisfunspace} was created by five mesh refinements resulting in approximately $N = 10^{4}$ (tangent) vectorial basis functions.
Moreover, the parameters of the basis functions were set to $k = 3$ and $h = 0.99$, cf. Sec.~\ref{sec:background:basisfun}.
The degree of the numerical cubature was chosen as 400, yielding approximately 8600 evaluation points on the spherical cap.
It remains to discuss the choice of the function $s$ in the regularisation functionals \eqref{eq:regfunctional} and \eqref{eq:regfunctional2}.
We chose it in dependence of the surface data $\hat{f}$ as
\begin{equation}
	s \coloneqq \begin{cases}
		1 - \eta & \text{if $1 - \eta < \hat{f}$}, \\
		\hat{f} & \text{if $\eta \le \hat{f} \le 1 - \eta$}, \\
		\eta & \text{if $\hat{f} < \eta$},
	\end{cases}
\label{eq:s}
\end{equation}
with $\eta = 10^{-4}$, which guarantees that $s(t, \cdot): \mathcal{M}_{t} \to (0, 1)$.

All experiments were performed on an Intel Core i5-6500 $3.20 \, \mathrm{GHz}$ MacBook Pro equipped with $16 \, \mathrm{GB}$ RAM.
The running time was governed by the evaluation of the integrals in \eqref{eq:linsystemof} and \eqref{eq:linsystemcm}, which altogether amounts to approximately 150 seconds per pair of frames in our Matlab implementation.
In comparison to previous works \cite{KirLanSch14, LanSch17}, where globally supported vectorial basis functions were employed and computation time was several hours, this represents a significant speed-up.
Furthermore, the memory requirements have been reduced drastically.

All systems of linear equations were solved by application of the backslash operator in Matlab and resulted in a relative residual less than $10^{-14}$ within just a few seconds.
Both the microscopy data\footnote{\url{https://doi.org/10.5281/zenodo.1211599}} and the source code of the implementation\footnote{\url{https://doi.org/10.5281/zenodo.1238910}} are available online.

\begin{figure}[t]
	\includegraphics[width=0.49\textwidth]{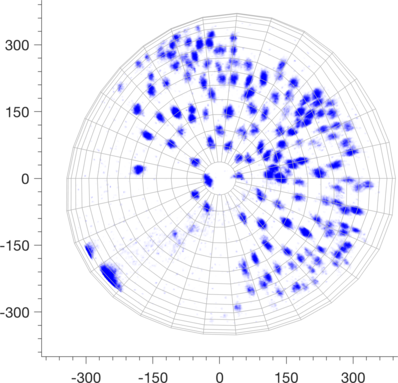} \hfill
	\includegraphics[width=0.49\textwidth]{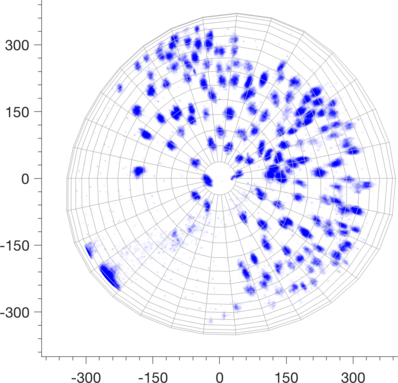}
	\caption{Shown are frames no. 112 (left) and 113 (right) of the processed microscopy image sequence in a top view with an artificial mesh superimposed. The mesh has been widened by one percent of its radius for better illustration. All dimensions are in micrometer ($\mu$m).}
	\label{fig:data2}
\end{figure}

\begin{figure}[t]
	\includegraphics[width=0.49\textwidth]{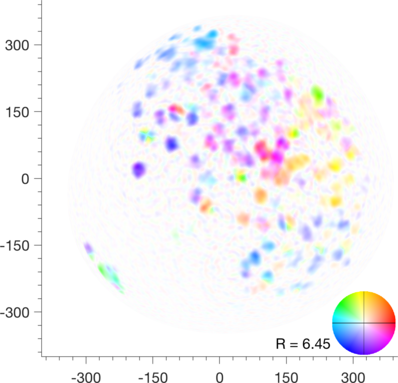} \hfill
	\includegraphics[width=0.49\textwidth]{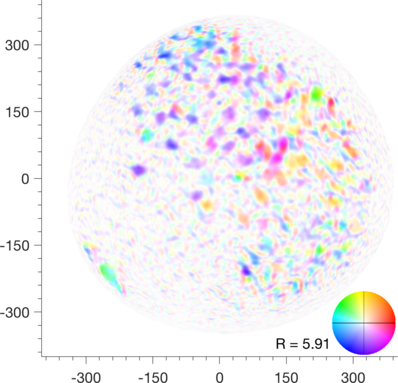} \vspace{0.5em}
	\\
	\includegraphics[width=0.49\textwidth]{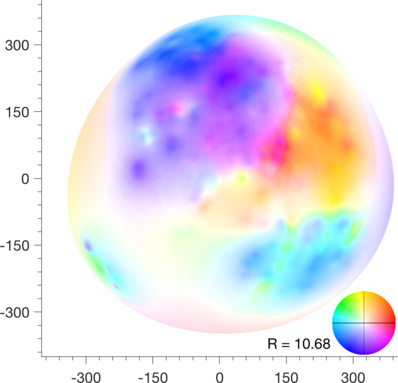} \hfill
	\includegraphics[width=0.49\textwidth]{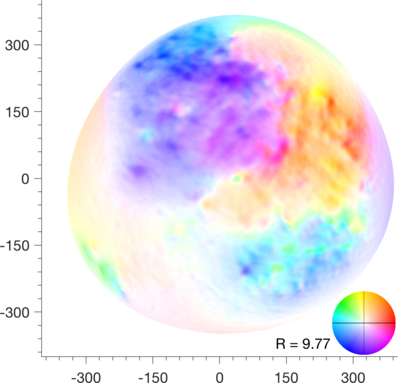}
	\caption{Colour-coded visualisation of minimisers $\mathbf{\hat{w}}$ of $\mathcal{E}$ (left) and $\mathbf{\hat{u}}$  of $\mathcal{F}$ (right), respectively, for two different choices of $s$. The top row shows $s$ as defined in \eqref{eq:s}, while the bottom row illustrates the choice $s \equiv 1$. The regularisation parameters were chosen as follows. Top left: $\alpha_{0} = 10^{-1}$ and $\alpha_{1} = 10^{-3}$. Top right: $\alpha_{0} = 10^{-1}$, $\alpha_{1} = 10^{-3}$, and $\alpha_{2} = 10^{-3}$. Bottom left and bottom right: $\alpha_{0} = 10^{-1}$. All dimensions are in micrometer ($\mu$m).}
	\label{fig:flow2}
\end{figure}

Figure~\ref{fig:data2} depicts the two selected (consecutive) frames of the processed microscopy image sequence in a top view.
All results shown in the following were computed for this particular pair of frames and are also shown in a top view only.

In Fig.~\ref{fig:flow2} we portray minimisers $\mathbf{\hat{w}}$ and $\mathbf{\hat{u}}$ of functionals $\mathcal{E}$ and $\mathcal{F}$, respectively.
Moreover, we compare the effect of two different choices of $s$.
The velocities are visualised with the help of the colour-coding introduced in Sec.~\ref{sec:experiments:visualisation}.
While the top row shows results for $s$ chosen as in \eqref{eq:s} and indicates that individual motion of cells is captured particularly well by the proposed model, the bottom row depicts results for $s \equiv 1$, which provides a better insight into the collective motion of cells on a global scale.
We highlight also the difference in the magnitude of the recovered velocity fields, which is indicated by the radius $R$ of the (scaled) colour disk.

\begin{figure}[t]
	\frame{\includegraphics[width=0.32\textwidth]{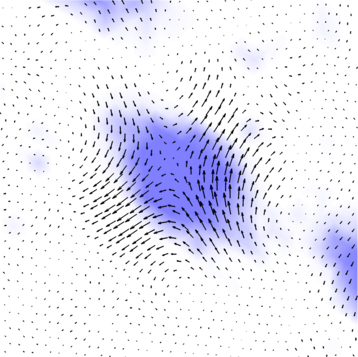}} \hfill
	\frame{\includegraphics[width=0.32\textwidth]{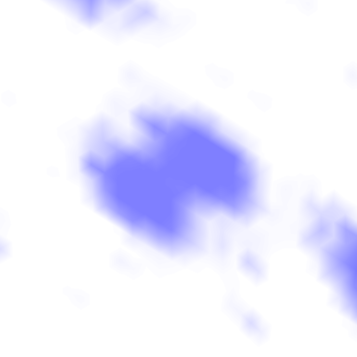}} \hfill
	\frame{\includegraphics[width=0.32\textwidth]{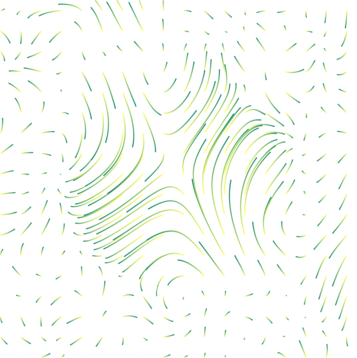}} \vspace{0.5em}
	\\
	\frame{\includegraphics[width=0.32\textwidth]{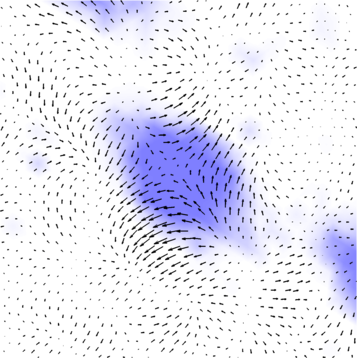}} \hfill
	\frame{\includegraphics[width=0.32\textwidth]{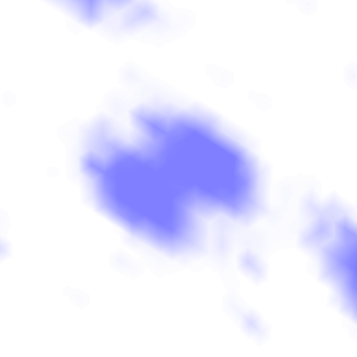}} \hfill
	\frame{\includegraphics[width=0.32\textwidth]{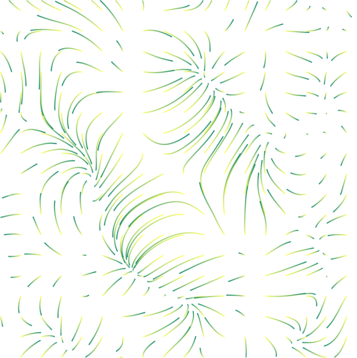}}
	\caption{Detailed view of estimated velocities $\mathbf{\hat{w}}$ (top) and $\mathbf{\hat{u}}$ (bottom) during a cell division. Depicted are the surface data $\hat{f}$ at frame $t = 112$ with the respective velocity superimposed (left), the data $\hat{f}$ at frame $t = 113$ (middle), and the streamline representation of the respective velocity (right), as outlined in Sec.~\ref{sec:experiments:visualisation}. The same parameters as in Fig.~\ref{fig:flow2} (top row) were used. For better illustration $\hat{f}$ has been brightened slightly.}
	\label{fig:flow2:detail}
\end{figure}

Figure~\ref{fig:flow2:detail} depicts a detailed section of the velocity fields shown in Fig.~\ref{fig:flow2} (top row) during a cell division.
Clearly, the cell division is adequately captured and the velocity fields are spatially confined.
Notice also the differences in the streamline plot.

\begin{figure}[t]
	\includegraphics[width=0.32\textwidth]{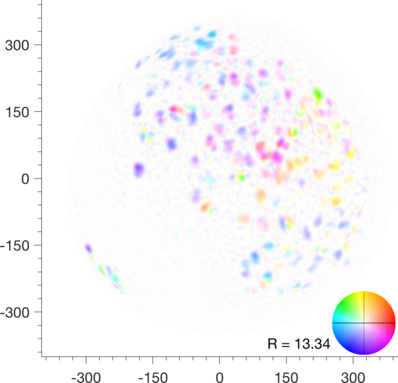} \hfill
	\includegraphics[width=0.32\textwidth]{{\resfolder/of-flow2-cxcr4aMO2_290112-alpha-0.1-beta-0.001-112}.png} \hfill
	\includegraphics[width=0.32\textwidth]{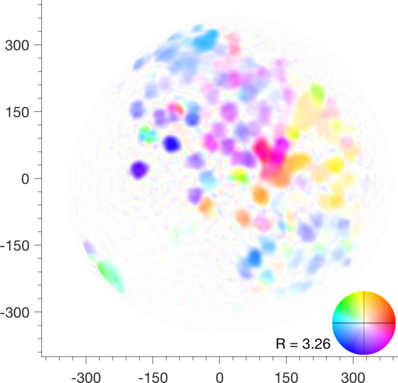} \vspace{0.5em}
	\\
	\includegraphics[width=0.32\textwidth]{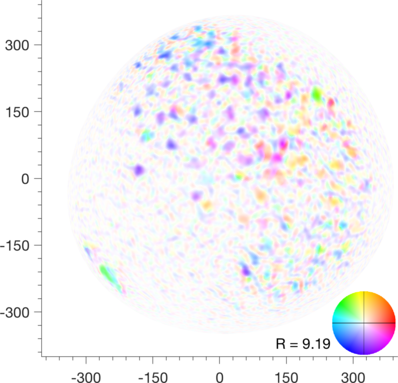} \hfill
	\includegraphics[width=0.32\textwidth]{{\resfolder/cm-flow2-cxcr4aMO2_290112-alpha-0.1-beta-0.001-gamma-0.001-112}.png} \hfill
	\includegraphics[width=0.32\textwidth]{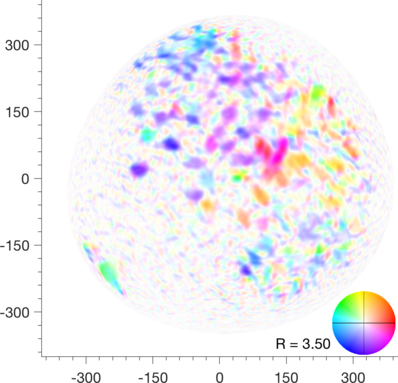}
	\caption{Computed tangent vector fields $\mathbf{\hat{w}}$ (top) and $\mathbf{\hat{u}}$ (bottom) for increasing regularisation parameter $\alpha_{0} = 10^{-2}$ (left), $\alpha_{0} = 10^{-1}$ (middle), and $\alpha_{0} = 1$ (right). The other parameters were kept fixed as $\alpha_{1} = 10^{-3}$ and $\alpha_{2} = 10^{-3}$. The function $s$ was set as in \eqref{eq:s}.}
	\label{fig:flow2:regparam1}
\end{figure}

In Fig.~\ref{fig:flow2:regparam1}, we illustrate tangent vector fields obtained for increasing regularisation parameter $\alpha_{0}$ for $s$ chosen as in \eqref{eq:s}.
Observe in both rows the broadening of the support and the decrease in magnitude of the velocity fields for increasing $\alpha_{0}$.

\begin{figure}[t]
	\includegraphics[width=0.32\textwidth]{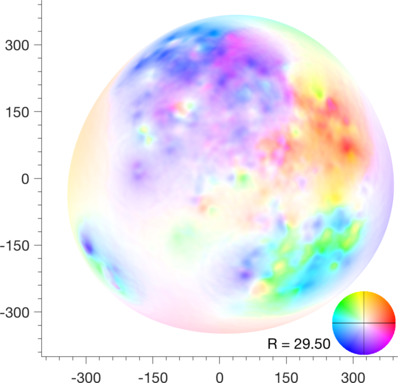} \hfill
	\includegraphics[width=0.32\textwidth]{{\resfolder/of-flow2-one-cxcr4aMO2_290112-alpha-0.1-beta-0-112}.png} \hfill
	\includegraphics[width=0.32\textwidth]{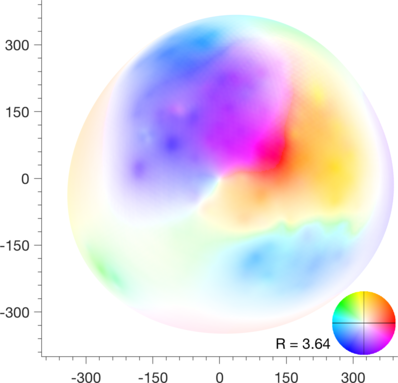} \vspace{0.5em}
	\\
	\includegraphics[width=0.32\textwidth]{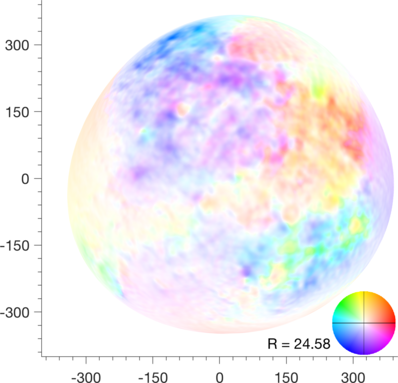} \hfill
	\includegraphics[width=0.32\textwidth]{{\resfolder/cm-flow2-one-cxcr4aMO2_290112-alpha-0.1-beta-0-gamma-0-112}.png} \hfill
	\includegraphics[width=0.32\textwidth]{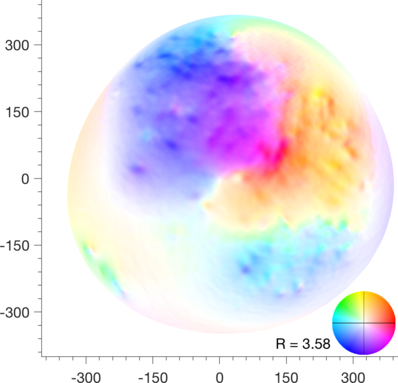}
	\caption{Computed tangent vector fields $\mathbf{\hat{w}}$ (top) and $\mathbf{\hat{u}}$ (bottom) for increasing regularisation parameter $\alpha_{0} = 10^{-2}$ (left), $\alpha_{0} = 10^{-1}$ (middle), and $\alpha_{0} = 1$ (right). The other parameters were kept fixed as $\alpha_{1} = 10^{-3}$ and $\alpha_{2} = 10^{-3}$. The function $s$ was set to $s \equiv 1$.}
	\label{fig:flow2:regparam2}
\end{figure}

\begin{figure}[t]
	\includegraphics[width=0.32\textwidth]{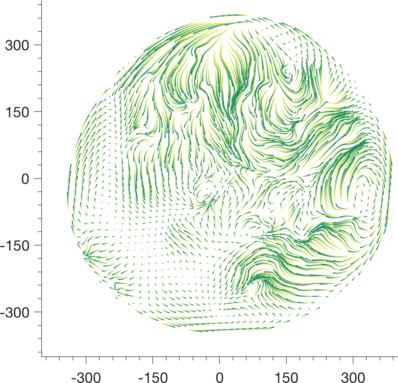} \hfill
	\includegraphics[width=0.32\textwidth]{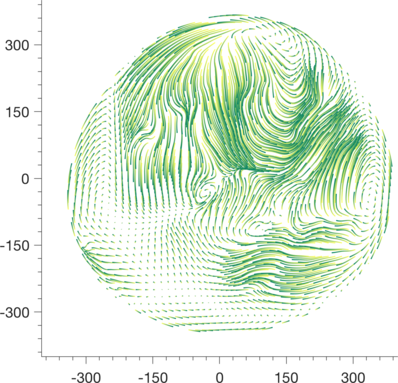} \hfill
	\includegraphics[width=0.32\textwidth]{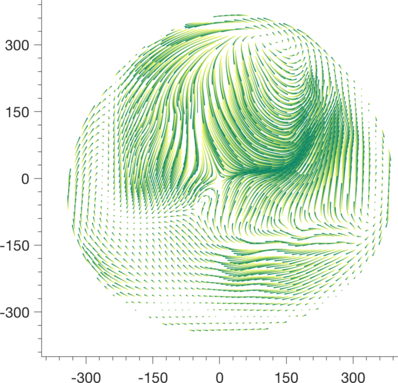} \vspace{0.5em}
	\\
	\includegraphics[width=0.32\textwidth]{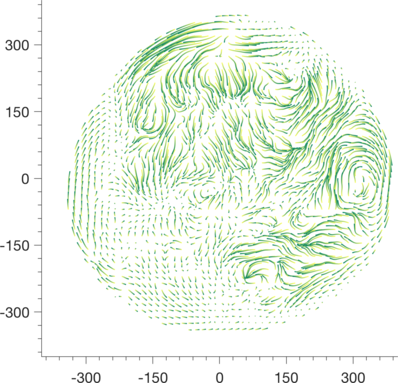} \hfill
	\includegraphics[width=0.32\textwidth]{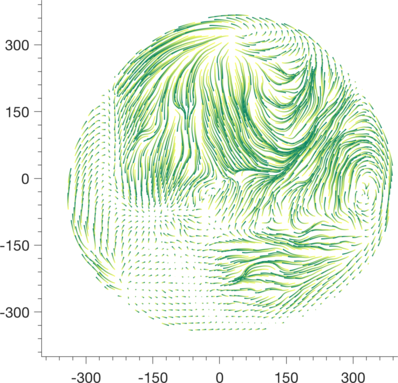} \hfill
	\includegraphics[width=0.32\textwidth]{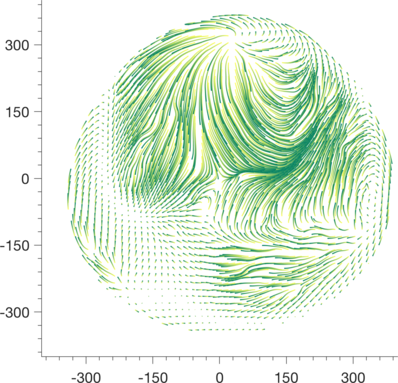}
	\caption{Shown are streamlines visualising the velocity fields from Fig.~\ref{fig:flow2:regparam2}, i.e. velocity fields obtained for increasing regularisation parameter $\alpha_{0}$ (from left to right). The top row shows streamlines computed for $\mathrm{P}_{x^{3}} \mathbf{\hat{w}}$, while the bottom row illustrates it for $\mathrm{P}_{x^{3}} \mathbf{\hat{u}}$. The change in colour from yellow to green illustrates the increasing parameter $\tau$.}
	\label{fig:streamlines}
\end{figure}

Moreover, Fig.~\ref{fig:flow2:regparam2} depicts minimisers for increasing parameter $\alpha_{0}$ for $s \equiv 1$.
As expected, the velocity fields become more regular with increasing $\alpha_{0}$ and, due to the regularisation functional being a norm (see Sec.~\ref{sec:background:sobolevspaces}), decrease in magnitude, which again is indicated by the radius $R$ of the colour disk.
These findings are in line with the results obtained in \cite{KirLanSch14, LanSch17}.
In Fig.~\ref{fig:streamlines}, we visualise the velocity fields from Fig.~\ref{fig:flow2:regparam2} with the help of streamlines as outlined in Sec.~\ref{sec:experiments:visualisation}.

\begin{figure}[t]
	\includegraphics[width=0.32\textwidth]{{\resfolder/of-flow2-cxcr4aMO2_290112-alpha-0.1-beta-0.001-112}.png} \hfill
	\includegraphics[width=0.32\textwidth]{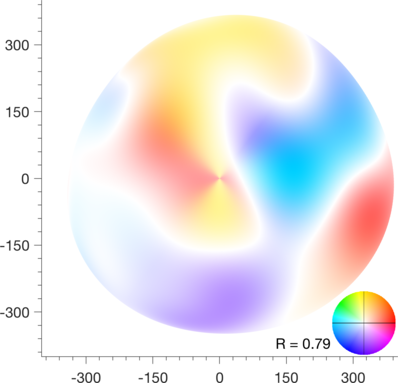} \hfill
	\includegraphics[width=0.32\textwidth]{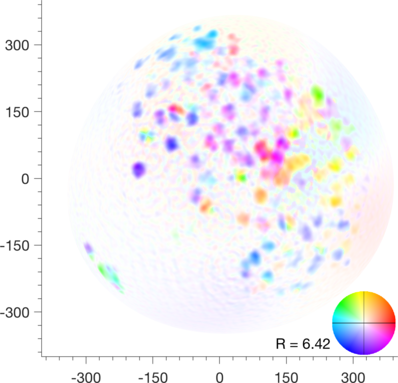} \vspace{0.5em}
	\\
	\includegraphics[width=0.32\textwidth]{{\resfolder/cm-flow2-cxcr4aMO2_290112-alpha-0.1-beta-0.001-gamma-0.001-112}.png} \hfill
	\includegraphics[width=0.32\textwidth]{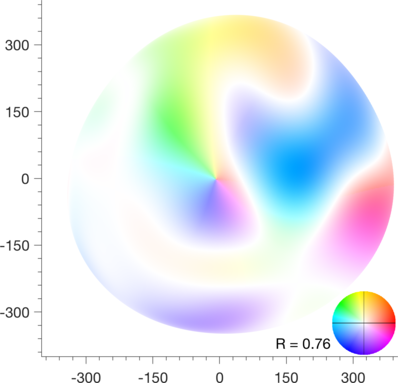} \hfill
	\includegraphics[width=0.32\textwidth]{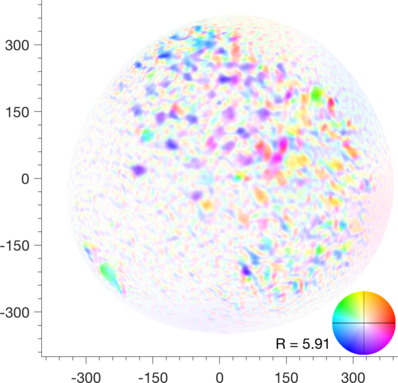}
	\caption{The top row depicts the velocity $\mathbf{\hat{w}}$ (left), the surface velocity $\mathbf{\hat{V}}$ (middle), and the total velocity estimated as $\mathbf{\hat{V}} + \mathbf{\hat{w}}$ (right). The bottom row depicts the velocity $\mathbf{\hat{u}}$ (left), the normal component $V \mathbf{\hat{N}}$ (middle), and the total velocity estimated as $V \mathbf{\hat{N}} + \mathbf{\hat{u}}$ (right). The same parameters as in Fig.~\ref{fig:flow2} (top row) were used.}
	\label{fig:motion2}
\end{figure}

As the main purpose of this article is cell motion estimation in volumetric fluorescence microscopy data, we also illustrate the total velocity $\mathbf{\hat{U}}$ of cells.
Recall from Sec.~\ref{sec:model} that for estimated velocity fields $\mathbf{\hat{w}}$ and $\mathbf{\hat{u}}$, it can be computed as $\mathbf{\hat{U}} = \mathbf{\hat{V}} + \mathbf{\hat{w}}$, respectively as $\mathbf{\hat{U}} = V \mathbf{\hat{N}} + \mathbf{\hat{u}}$.
Here, $\mathbf{\hat{V}}$ denotes the surface velocity, $V$ its (scalar) normal component, and $\mathbf{\hat{N}}$ the outward unit normal.
From the definition of the parametrisation $\y$ in \eqref{eq:param} it follows that the surface velocity is given by
\begin{equation*}
	\mathbf{\hat{V}} = \partial_{t} \y = \partial_{t} \tilde{\rho} \x.
\end{equation*}
Note that $\mathbf{\hat{V}}$ is radial.
In Fig.~\ref{fig:motion2}, we compare the total velocities obtained for the velocity fields shown in Fig.~\ref{fig:flow2} (top row).
Observe the difference between $\mathbf{\hat{V}}$ and $V \mathbf{\hat{N}}$, and the significant difference in the magnitudes of the visualised velocities.
\section{Conclusion} \label{sec:conclusion}

With the intention of efficient motion estimation in volumetric microscopy data of a living zebrafish embryo, we followed the paradigm of dimensional reduction and considered brightness and mass conservation on evolving sphere-like surfaces.
We derived a generalised continuity equation valid for time-varying surfaces embedded in Euclidean 3-space and discussed its relation to the generalised optical flow equation derived in \cite{KirLanSch15}.
In light of the ill-posedness of the discussed conservation laws we proposed the use of spatially varying regularisation functionals suited for considered microscopy data of fluorescently labelled cells.
For the efficient numerical solution we devised a Galerkin method based on compactly supported (tangent) vectorial basis functions allowing for efficient evaluation of the optimality conditions.
A significant performance improvement compared to previous methods that are based on globally supported basis functions was observed.
In order to accurately estimate the velocity of the (artificially) imposed sphere-like surface, we considered surface interpolation with spatial and temporal regularisation, which can be approximately and efficiently minimised with the help of scalar spherical harmonics expansion.
We performed several experiments on the basis of aforementioned zebrafish microscopy data
The computed velocity fields indicate that cell motion can be estimated well and efficiently with the proposed method.

\paragraph{Acknowledgements}
The author thanks Pia Aanstad for kindly providing the microscopy data.
Moreover, he is grateful to Peter Elbau, Christian Gerhards, and Clemens Kirisits for their helpful comments.
The author acknowledges support from Leverhulme Trust project ``Breaking the non-convexity barrier'', EPSRC grant ``EP/M00483X/1'', EPSRC centre ``EP/N014588/1'', the Cantab Capital Institute for the Mathematics of Information, and from CHiPS (Horizon 2020 RISE project grant).


\def\cprime{$'$} \providecommand{\noopsort}[1]{}

\end{document}